\DeclareMathOperator*{\esssup}{ess\,sup}
\DeclareMathOperator*{\essinf}{ess\,inf}
\newtheorem{thm}{Theorem}[section]
\newtheorem{lem}[thm]{Lemma}
\theoremstyle{definition}
\newtheorem{defn}[thm]{Definition}
\theoremstyle{remark}
\theoremstyle{example}
\numberwithin{equation}{section}
\newcommand{\R}{\mathbb R}
\newcommand{\eps}{\varepsilon}
\newcommand{\cL}{\mathcal{L}}
\renewcommand{\P}{\mathbb P}
\def\R{\mathbb R}
\def\E{\mathbb E}
\def\bg{\begin}
\def\be{\bg{equation}}
\def\de{\end{equation}}
\def\edar{\end{eqnarray}}
\newcommand{\nn}{\nonumber}
\def\lb{\label}
\def\l{\left}\def\r{\right}
\def\eps{\epsilon}
\def\q{\quad}
\def\ccap{\text {\rm cap}}
\def\[{\l[} \def\]{\r]}
\def\({\l(} \def\){\r)}
\def\bar{\overline}
\def\H{{\bf (Har)}}
\def\CG{{\bf (G)}}
\def\CGle{{\bf (G,\, \le)}}
\def\CGge{{\bf (G,\, \ge)}}
\def\CGad{{\bf (G^*)}}
\def\CGlead{{\bf (G^*,\, \le)}}
\def\CGgead{{\bf (G^*,\, \ge)}}
\def\CC{{\bf{(C)}}}
\def\CCle{{\bf (C,\, \le)}}
\def\CCge{{\bf (C,\, \ge)}}
\def\CE{{\bf (E)}}
\def\CEle{{\bf (E,\, \le)}}
\def\CEge{{\bf (E,\, \ge)}}
\def\CEad{{\bf (E^*)}}
\def\CElead{{\bf (E^*,\, \le)}}
\def\CEgead{{\bf (E^*,\, \ge)}}
\def\HG{{\bf (HG)}}
\def\1{{\bf 1}}
\newcommand{\ass}[1]{{\bf Assumption \,\,{#1}}}
\def\P{\mathbb P}
\def\a{\mathbf{a}}
\def\R{\mathbb{R}}
\def\Rd{\mathbb{R}^d}
\def\beqlb{\begin{eqnarray}}\def\eeqlb{\end{eqnarray}}
\def\beqnn{\begin{eqnarray*}}\def\eeqnn{\end{eqnarray*}}
\title{\bf  Capacity and Exit Time for Non-reversible Diffusions}
\author{Lu-Jing Huang,
	Kyung-Youn Kim*}
\begin{document}
\maketitle


\begin{abstract}
Capacity is an important quantity in potential theory and in the study of Markov processes. We give equivalent conditions between the capacity, the mean exit time, and the Green function for non-reversible diffusions.
\end{abstract}

{\bf Keywords and phrases:} Capacity, mean exit time, Green function, non-reversible diffusion.

{\bf MSC 2020 Subject classification:} 35J25, 60J45 (31B15)

\section{Introduction}

Potential theory and Markov processes are well connected and give the insight into each theory. For example, potential theory provides some efficient recurrence and transience criteria for Markov processes, see e.g. \cite{C04,DS84,GL14}. Specifically, the Dirichlet and Thompson principles, which express the capacity in terms of infimum and supremum respectively, can be used to prove the recurrence of Markov processes see \cite{BGL07,So94,Wo00} for more details.
Also capacities, Green functions, equilibrium potentials and equilibrium measures are basic tools to investigate the metastability of Markov processes, see e.g. \cite{BL12,B06,BEGK02,BEGK04} for reversible cases, and \cite{BL12a,L14,LMS19} for non-reversible cases.

Recently, the authors in \cite{GH14} established relation between the capacity, the mean exit time and the Green function for symmetric Markov processes, and find out the corresponding conditions are equivalent to the existence and estimates of the associated heat kernel. In this paper, we want to generalize these results to the non-reversible case.

The structure of this article is the following. In Section \ref{notat_main}, we introduce some necessary background material, such as assumptions of operators and processes, conditions on the Green function, the exit time and the capacity. We also present our main result in Theorem \ref{main}. In Section \ref{Green_cap}, we give some properties of the Green function and the capacity which are useful to prove our main result in Section 4. The proof of our main results are presented in Section \ref{proof}.

{\bf Notation}:
For any two nonnegative values $\Phi$ and $\Psi$, the notation $ \Phi\asymp \Psi$ means that there is a positive constant
$c>1$ such that $c^{-1}\ \Phi\le  \Psi\le c\ \Phi$.
The letter $c$ will denote a positive constant which depends only on non-essential parameters, and it may change at each appearance.


\section{Conditions and main result}\label{notat_main}

We start by introducing some notations.
For fixed $d\geq 3$, let $\Omega\subseteq \R^d$ be a bounded domain. Consider an operator $\cL$ in divergence form
\begin{equation}\label{gnrt}
\cL=\nabla\cdot \a\nabla+b\cdot\nabla,
\end{equation}
where $\a(x)=(a_{ij}(x))_{1\leq i,j\leq d}$ is a positive-definite matrix and $b(x)=(b_1 (x),\cdots,b_d (x))$ is a vector in $\Omega$.

Denote by $C^k(\Omega)$, $0\leq k\leq \infty$, the space of real functions on $\Omega$ whose partial derivatives up to order $k$ are continuous, and $C^{k,\alpha}(\Omega)$, $0<\alpha<1$, the subspace of $C^k(\Omega)$ containing functions whose $k$-th order partial derivatives are $\alpha$-H\"older continuous.
 We assume the following regularity conditions:
\begin{itemize}
\item[(A1)] $a_{ij},  b_{i}, \sum_{i=1}^d \partial_{x_i} a_{ij}\in C^{1, \alpha}(\Omega)$ for some $\alpha\in(0,1)$;\\

\item[(A2)] There exists a constant $\lambda>0$ such that $v\cdot  \a(x)v \geq \lambda|v|^2\text{ for all } x\in \Omega \text{ and } v\in\Omega\backslash \{0\}.$
\end{itemize}
By \cite[Theorem 1.13.1]{Pi95}, there exists a unique solution $\{\P_x:x\in\Omega\}$ to the martingale problem associated with $\cL$ satisfying  the strong Markov property and the Feller property.
Corresponding to operator $\cL$, there is a diffusion $X=(X_t)_{t\geq0}$ and a density $p(t,x,y)$ on $\Omega$, which is called heat kernel for $\cL$.
Throughout the paper, we assume that $X$ is positive recurrent with a unique invariant distribution $\mu(dx)$ satisfying that for any $t>0$ and any Borel set $B\subseteq \Omega$,
$$
\int_\Omega p(t,x,B)\mu(dx)=\mu(B).
$$
For the notational convenience, we use the same notation $\mu(x)$ to denote the density of $\mu$ with respect to Lebesgue measure.
Let $L^2(\mu)$ be the space of functions on $\Omega$ which are square-integrable with respect to $\mu$ with the inner product $\langle\cdot,\cdot\rangle_\mu$ defined by
$$
\langle f,g \rangle_\mu=\int_\Omega f(x)g(x)\mu(dx)=\int_\Omega f(x)g(x)\mu(x)dx.
$$

Let $D$ be a domain in $\Omega$ such that $\bar{D}\subset \Omega$, denoted by $D\Subset \Omega$. We assume that $D$ has a $C^{2,\alpha}-$boundary for some $\alpha\in(0,1)$, i.e., for each point $x\in\partial D$, there exist a ball $B$ centered at $x$ and a one-to-one map $\psi$ from $B$ to a set $C\subset \R^d$ such that
\begin{align*}
&\psi(B\cap D)\subset \{z\in \Rd:{z_d}>0\},\quad \psi(B\cap \partial D)\subset \{z\in \Rd: z_d=0\},\\
&\psi\in C^{k, \alpha}(B)\quad\quad\text {and}\quad\quad \psi^{-1}\in C^{k, \alpha}(C).
\end{align*}
For the first exit time $T_D=\inf\{t\geq0:X_t\notin D\}$ of $D$, consider the sub-diffusion $X^D$ which is killed upon exiting $D$, that is,
$$
X_t^D:=X_t\quad\text{ if }t< T_D,\q\text{ and }\q X_t:=X_{T_D}\quad\text{ if }t\ge T_D.
$$
Let $p_D(t,x,y)$ be the transition density of the killed process $X^D$. Since $X$ is positive recurrent in $\Omega$ and $D\Subset \Omega$, the killed diffusion $X^D$ is transient.
Thus \cite[Theorem 4.2.1 and 4.2.4]{Pi95} guarantee the existence of the Green operator $G_D$ satisfying
$$
G_Df(x)=\int_0^\infty\int_Dp_D(t,x,y)f(y)dydt\quad\text{for } f\in L^2(\mu),\ x\in\Omega.
$$
The kernel $g_D(x,y):=\int_0^\infty p_D(t,x,y)dt$ of $G_D$ is called the Green function satisfying that
\begin{equation}\label{g_e}
\E_x[T_D]=G_D{\bf 1}(x)=\int_D g_D(x,y)dy.
\end{equation}

For any domains $A,\ B\subset \Omega$, let us introduce the definition of the capacity for $X$ between $A, B$.
We first present the {\bf Assumption S} for $A$ and $B$ as follows:
\begin{itemize}
\item[(1)] $A,B\subset \Omega$ are two domains with $C^{2,\alpha}$-boundaries for some $\alpha\in(0,1)$;\\

\item[(2)] $\sigma(A),\ \sigma(B)<\infty$, where $\sigma$ is the measure on the boundary of set;\\

\item[(3)] $d(A,B):=\inf\{|x-y|:x\in A, y\in B\}>0$.
\end{itemize}

Consider the Poisson equation for $\cL$ on $A,B$:
\be\lb{pe}
\begin{cases}
\mathcal{L}u(x)=0, &x\in(\bar{A}\cup\bar{B})^c,\\
u(x)=1, &x\in \partial A,\\
u(x)=0, &x\in \partial B.
\end{cases}
\de
Under the conditions (A1)--(A2), there exists an unique solution
$$
h_{A,B}(x)=\P_x(\tau_A<\tau_B), \quad x\in\Omega,
$$
where $\tau_\cdot$ is the first hitting time of diffusion $X$, see e.g., \cite[Theorem 3.3.1]{Pi95}. Using the function $h_{A,B}$, we define the capacity between $A$ and $B$ by
$$
\ccap(A,B)=-\langle h_{A,B},\cL h_{A,B}\rangle_\mu.
$$

In order to present our main result, we introduce conditions of the Green function, the mean exit time and the capacity.

We say that the condition $\CG$ holds if there exist constants $ K, C\ge 1$ such that for any ball $B:=B(x_0, R)\subset \Omega$, the Green function $g_B(y, x_0)$ has the upper and lower bounds as follows:
		\begin{align*}
		{\CGle}:\qquad g_B(y,x_0)&\le C |x_0- y|^{2-d} \quad\quad\text{ for all}\,\,y\in B\backslash \{x_0\}\,, \\
		\CGge:\qquad g_B(y,x_0)&\ge C^{-1}|x_0- y|^{2-d} \quad\quad\text{for all}\,\, y\in  K^{-1}B\backslash \{x_0\}\,.
		\end{align*}
We say the condition $\CE$ holds if
there exist constants $C\ge 1, \delta\in(0, 1)$ such that for any ball $B\subset \Omega$ of radius $R>0$, the mean exit time $\E_{ \cdot}[T_B]$ has the upper and lower bounds as follows:
\begin{align*}
{\CEle}:&\qquad\esssup_{ x\in B} \E_x[T_B]\le C\, R^2 , \\
{\CEge}:&\qquad\essinf_{ x\in\delta  B} \E_x[T_B]\ge C^{-1}\,R^2\,.
\end{align*}
Finally, related to the capacity, we say condition $\CC$ is fulfilled if there exist constants $K, C\ge1$ such that for any ball $B$ of radius $R>0$ with $B, KB\subset \Omega$,
$\ccap(B, \overline {KB}^{\,c})$ has the following bounds:
		\begin{align*}
		\CC: \qquad C^{-1} \, \frac{\mu(B)}{R^2}\le \ccap(B, \overline {KB}^{\,c})\le C\,    \frac{\mu(B)}{R^2}\,.
		\end{align*}
Note that the constants $C, K, \delta$ of the conditions $\CG, \CE$ and $\CC$.

We denote by $X^*$ the dual process of $X$ with respect to $\mu$ with the generator
$$
\cL^*=\nabla\cdot \a\nabla+\big(\tfrac{2}{\mu}\a\nabla\mu-b\big)\cdot\nabla.
$$
Let $p^*(t,x,y)$ and $p^*_D(t,x,y)$ be the heat kernels of $X^*$ and $X^{*D}$ respectively.
Then we have the following relations between $p(t, x, y)$ ($p_D(t,x,y)$, respectively) and $p^*(t,x,y)$  ($p^*_D(t,x,y)$, respectively) with the invariant distribution $\mu$:
for all $t>0$, $x,y\in\Omega$,
\begin{equation}\label{dual:k}
\mu(x)p^*(t,x,y)=\mu(y)p(t,y,x)\quad
 \text{and}\quad
  \mu(x)p_D^*(t,x,y)=\mu(y)p_D(t,y,x).
\end{equation}
We will use $*$ all objects corresponding to the dual process $X^*$, e.g., $G^*$ and $g^*$ stand for its Green operator and Green function, and
$\CGad, \CEad$ and ${\bf (C^*)}$ the corresponding conditions.

Since $\mu\in C^1(\Omega)$ and $\Omega$ is bounded domain,
\eqref{dual:k} implies that
\begin{equation}\label{g_g*}
\CGle\Longleftrightarrow\CGlead\quad
 \text{and}\quad
  \CGge\Longleftrightarrow\CGgead.
\end{equation}

The following is our main result in this paper.

\begin{thm}\label{main}
Let $\cL$ be an operator defined in \eqref{gnrt} satisfying conditions (A1)--(A2). Denote by $X$ the corresponding diffusion and assume it is positive recurrent with invariant distribution $\mu$. Then we have
	\begin{align*}
	({\bf C})\Longleftrightarrow({\bf G})\Longleftrightarrow({\bf E^*}).
	\end{align*}
\end{thm}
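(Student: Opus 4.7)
I would prove the triple equivalence by establishing the two-way links $\CG \Longleftrightarrow \CEad$ and $\CG \Longleftrightarrow \CC$, with $\CG$ as the hub. The central analytic tool is the duality identity $\mu(x)\,g^*_D(x,y) = \mu(y)\,g_D(y,x)$ together with the equivalence $\CG \Longleftrightarrow \CGad$ in \eqref{g_g*}, which lets me translate Green-function estimates freely between $X$ and $X^*$. The regularity hypotheses (A1)--(A2) supply the interior elliptic Harnack inequality for both $\cL$ and $\cL^*$ and $C^{2,\alpha}$ boundary regularity, both of which are indispensable for the pointwise steps.

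\textbf{$\CG \Longrightarrow \CEad$.} For a ball $B=B(x_0,R)\Subset\Omega$ and $x\in B$, combining \eqref{g_e} applied to $X^*$ with the duality identity yields
\[
\E^*_x[T_B] \;=\; \int_B g^*_B(x,y)\,dy \;=\; \frac{1}{\mu(x)}\int_B \mu(y)\, g_B(y,x)\,dy.
\]
For the upper bound I compare $T_B \le T_{B(x,2R)\cap\Omega}$ and apply $\CGle$ to a ball centred at $x$; since $\mu$ is bounded above and below on $\bar\Omega$, integration in polar coordinates gives $\E^*_x[T_B]\le CR^2$. For the lower bound I take $x\in\delta B$ with $\delta$ small, compare $T_B\ge T_{B(x,(1-\delta)R)}$, and apply $\CGge$ to that interior ball to obtain $\E^*_x[T_B]\ge cR^2$.

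\textbf{$\CEad \Longrightarrow \CG$.} This direction is harder because an averaged quantity must be upgraded to a pointwise one. The map $y\mapsto g_B(y,x_0)$ is $\cL$-harmonic on $B\setminus\{x_0\}$ with the Newtonian singularity $g_B(y,x_0)\sim c|y-x_0|^{2-d}$ as $y\to x_0$, inherited from the leading order of $\cL$ (cf.\ \cite[Chapter 4]{Pi95}). The plan is to use $\CElead$ together with the duality formula above to control the $\mu$-integrals of $g_B(\cdot,x_0)$ on dyadic annuli around $x_0$, then invoke the elliptic mean-value/Harnack inequality for $\cL^*$ on each annulus to convert these averaged bounds into the pointwise upper bound $\CGle$. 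The lower bound $\CGge$ follows by a Harnack chain joining any point of $K^{-1}B$ to a small sphere around $x_0$, once $\CEgead$ and Moser's iteration furnish a pointwise lower bound for the Green function on that sphere.

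\textbf{$\CG \Longleftrightarrow \CC$.} The equilibrium potential $h = h_{B,\overline{KB}^{\,c}}$ is $\cL$-harmonic on $KB\setminus\bar B$ with boundary values $1$ on $\partial B$ and $0$ on $\partial(KB)$. Representing $h$ as the Green potential $G_{KB}\nu$ of a nonnegative equilibrium measure $\nu$ supported on $\partial B$, a Green--Stokes computation identifies $\ccap(B,\overline{KB}^{\,c}) = -\langle h,\cL h\rangle_\mu$ with the total mass $\nu(\partial B)$ up to $\mu$-factors. Under $\CG$ the identity $1 = h(x) = \int g_{KB}(x,y)\,d\nu(y)$ for $x\in\partial B$, combined with two-sided bounds on $g_{KB}$ extracted from $\CG$ by domain monotonicity and Harnack, pins $\nu(\partial B)\asymp \mu(B)/R^2$, giving $\CC$. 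The converse $\CC\Longrightarrow\CG$ is the deepest step and shares the difficulty of $\CEad\Longrightarrow\CG$: $\CC$ fixes the size of $\nu$, and the pointwise bounds $\CG$ are recovered from the identity $G_{KB}\nu\equiv 1$ on $\partial B$ together with the same Harnack upgrade — the precise place where (A1)--(A2) are indispensable.
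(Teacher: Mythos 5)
Your architecture differs from the paper's: you use $\CG$ as a hub and propose four implications, whereas the paper closes a cycle $\CC\Rightarrow\CG\Rightarrow\CEad\Rightarrow\CC$, so it never has to prove either $\CEad\Rightarrow\CG$ or $\CC\Rightarrow\CG$ ``near the pole'' twice. Your $\CG\Rightarrow\CEad$ step matches the paper's almost verbatim, and your $\CG\Rightarrow\CC$ step is a legitimate shortcut: it is essentially the composition of the paper's equilibrium-measure representation (Lemma \ref{lem6.5}) with the Harnack-based two-sided identity $\sup_{\partial A}g^*_{\bar B^c}(\cdot,x)\asymp\mu(x)/\ccap(A,B)\asymp\inf_{\partial A}g^*_{\bar B^c}(\cdot,x)$ (Lemma \ref{lem7.2}), evaluated at $x=x_0$ on $\partial B$. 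The paper instead gets $\CC$ from $\CEad$ via a direct comparison of $\ccap(A,B)^{-1}$ with dual mean exit times (Lemma \ref{lem8.3-8.4}).

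The genuine gap is in your $\CEad\Rightarrow\CG$ (and it recurs in the upper bound of your $\CC\Rightarrow\CG$). You claim that $\CElead$ controls the averages of $g_B(\cdot,x_0)$ on dyadic annuli at the correct order. It does not: duality and $\CElead$ give $\int_B g_B(y,x_0)\,dy\asymp\mu(x_0)\,\E_{x_0}[T^*_B]\le CR^2$, so on the annulus $A_k=\{2^{-k-1}R\le|y-x_0|\le 2^{-k}R\}$ the average is only bounded by $CR^2(2^{-k}R)^{-d}=C\,2^{2k}(2^{-k}R)^{2-d}$, which is worse than the target $(2^{-k}R)^{2-d}$ by the factor $2^{2k}$. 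To get the right order you must apply the exit-time bound to the small concentric ball $B_k=B(x_0,2^{-k+1}R)$, which controls $g_{B_k}(\cdot,x_0)$, and then estimate the harmonic difference $g_B(\cdot,x_0)-g_{B_k}(\cdot,x_0)$ by the maximum principle and sum the resulting boundary suprema over the nested balls. That telescoping comparison is exactly the content of the paper's Lemma \ref{Lem:ng} (proved via an approximation $G^*_{B_{k+1}}f_n-G^*_{B_k}f_n$, the maximum principle of Lemma \ref{prop4.2}, and Lemma \ref{lem7.2} to convert each boundary supremum into an inverse capacity), and it is the one idea your proposal is missing; without it the ``Harnack upgrade'' on annuli starts from an average that is too large. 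The lower bound $\CGge$ in your sketch is fine, since there a single application of the maximum/minimum principle (Lemma \ref{lem5.3}) on $K^{-1}B$ suffices and no summation over scales is needed.
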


 Here we want to point out that the constant $K>0$ in $\CC$ and $\CG$ is same,  and  $\delta=K^{-1}$ in $\CE$.

The characteristics related to the Green operators and the capacity will be introduced in Section 3, and we give the proof of Theorem \ref{main} in Section 4 by showing $\CC\Rightarrow \CG$, $\CG\Rightarrow\CEad$ and $\CEad\Rightarrow\CC$.

\section{Green operator and capacity}\label{Green_cap}

\subsection{Green operator and Green function}
In this subsection, we give some properties of the Green operator and the Green function which are useful to prove $\CC\Rightarrow\CGad$ in the subsection 4.1.

First, we introduce the definition of harmonic function.

\begin{defn}\label{d:har}
	For any open set $U\subset \Omega$, a function $u$ is called harmonic (subharmonic, superharmonic, respectively)
	for $\cL$ on $U$ if it satisfies
	$\mathcal{L}u=0\,(\geq,\,\leq \,,\text{respectively})$ in $U$.
	%
\end{defn}

The following lemma tells us that Green functions $g_\cdot(\cdot,y)$ and $g^*(\cdot,y)$ are harmonic.

\begin{lem}\label{Thm425}
Under the same conditions of Theorem \ref{main}, for any domain $D \Subset\Omega$, we have that
\begin{itemize}
\item[(1)] $g_D(x,y)$ and $g^*_D(x,y)$ are positive and jointly continuous on $U_1\times U_2$ whenever $U_1,U_2\subset D$ and $U_1\cap U_2=\emptyset$.
\item[(2)] For each $y\in D$, $g_D(\cdot, y)\in C^{2,\alpha}(D\backslash \{y\})$ and $\mathcal{L}g_D(\cdot,y)=0$ on $D\backslash \{y\}$.
\item[(3)] For each $y\in D$, $g_D^*(\cdot, y)\in C^{2,\alpha}(D\backslash \{y\})$ and $\mathcal{L}^*g_D^*(\cdot,y)=0$ on $D\backslash \{y\}$.
\end{itemize}
\end{lem}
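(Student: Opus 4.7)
The plan is to reduce everything to classical results for second-order uniformly elliptic operators with H\"older continuous coefficients, and then to transfer the statement from $\cL$ to $\cL^*$ by duality via \eqref{dual:k}.

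For (1), I would derive positivity of $g_D$ and $g_D^*$ from the strict positivity of the transition densities $p_D(t,x,y)$ and $p_D^*(t,x,y)$ for all $t>0$ and $x,y\in D$, which is standard for non-degenerate diffusions with H\"older coefficients (see, e.g., \cite[Theorem 4.2.4]{Pi95}). For joint continuity on $U_1\times U_2$ with $\overline{U_1}\cap\overline{U_2}=\emptyset$, I would split $g_D(x,y)=\int_0^\delta p_D(t,x,y)\,dt+\int_\delta^\infty p_D(t,x,y)\,dt$: on the small-time part, Gaussian-type off-diagonal bounds give an integrable majorant uniform in $(x,y)$ away from the diagonal, while on the large-time part the exponential decay in $t$ coming from transience of $X^D$ (guaranteed by $D\Subset\Omega$ plus positive recurrence of $X$), combined with joint continuity of $p_D(t,\cdot,\cdot)$, allows dominated convergence to pass to the limit.

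For (2), I would derive the interior harmonicity $\cL g_D(\cdot,y)=0$ on $D\setminus\{y\}$ from the mean value identity $g_D(x,y)=\E_x[g_D(X_{\tau_V},y)]$ for smooth subdomains $V$ with $x\in V\Subset D\setminus\{y\}$, which follows from the strong Markov property and Dynkin's formula applied to $g_D(\cdot,y)$, itself continuous by (1). Since (A1) provides $C^{1,\alpha}$ coefficients and (A2) uniform ellipticity, interior Schauder estimates then upgrade $g_D(\cdot,y)$ from continuity to $C^{2,\alpha}(D\setminus\{y\})$.

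For (3), it suffices to verify that $\cL^*$ meets the same hypotheses (A1)--(A2) and then reapply (1) and (2). Uniform ellipticity is inherited since the diffusion matrix is unchanged, so the only point to check is that the drift $\tfrac{2}{\mu}\a\nabla\mu-b$ lies in $C^{1,\alpha}(\Omega)$; this reduces to $\mu\in C^{2,\alpha}(\Omega)$ with $\mu>0$, which I would obtain by combining the strong maximum principle with an elliptic bootstrap applied to the adjoint equation $\cL^*\mu=0$, starting from the $\mu\in C^1$ regularity recorded before \eqref{g_g*}. The main obstacle I anticipate is precisely this regularity and strict positivity upgrade for $\mu$, since it couples the adjoint PDE with Schauder estimates; everything else is standard machinery already assembled in \cite[Theorems 4.2.4--4.2.5]{Pi95}.
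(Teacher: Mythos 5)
Your proposal is essentially a from-scratch reconstruction of results that the paper disposes of in two lines: the paper's proof simply invokes \cite[Theorems 4.2.1, 4.2.4 and 4.2.5]{Pi95}, which already contain the existence, positivity, joint continuity, $C^{2,\alpha}$-regularity and harmonicity of $g_D$ and $g_D^*$ for operators of exactly this class. So the two routes differ not in mathematical content but in level: you rebuild the classical elliptic/parabolic machinery (off-diagonal heat kernel bounds plus spectral decay for continuity, local Dirichlet problems plus Schauder theory for regularity), while the paper outsources all of it. Your version is more self-contained and makes visible exactly which hypotheses ((A1), (A2), transience of $X^D$) are used where; the paper's version is shorter and avoids re-proving standard facts.

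Two points in your sketch need repair, though neither is fatal. First, in (2) you cannot apply Dynkin's formula to $g_D(\cdot,y)$ to \emph{derive} the mean value identity, since Dynkin's formula presupposes the $C^2$ regularity you are trying to establish; the identity $g_D(x,y)=\E_x[g_D(X_{\tau_V},y)]$ should instead come from the strong Markov property applied to the occupation-time representation $g_D(x,y)=\int_0^\infty p_D(t,x,y)\,dt$, and the upgrade to $C^{2,\alpha}$ then goes by solving the Dirichlet problem on $V$ with boundary data $g_D(\cdot,y)|_{\partial V}$ (classical by Schauder theory) and identifying the solution with $g_D(\cdot,y)$ via the uniqueness of the stochastic representation. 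Second, in (3) the invariant density does not satisfy $\cL^*\mu=0$ with the paper's $\cL^*$ (the $\mu$-adjoint); it satisfies the formal Lebesgue adjoint equation $\nabla\cdot(\a\nabla\mu-b\mu)=0$. That is still a uniformly elliptic divergence-form equation with $C^{1,\alpha}$ coefficients, so your bootstrap-plus-Harnack argument for $\mu\in C^{2,\alpha}$ and $\mu>0$ goes through once you start from the correct equation.
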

\begin{proof}
Recall that \cite[Theorem 4.2.1 and 4.2.4]{Pi95} give the existences of Green operators $G_D$ and $G_D^*$ with kernels $g_D(x, y)$ and $g_D^*(x, y)$, respectively. Thus we obtain the desired results by \cite[Theorem 4.2.5]{Pi95}.
\end{proof}

From the following lemma, we can see that $G_D$ is considered as the inverse of $\cL$ in $D$.

\begin{lem}\label{LG=GL}
Let the conditions of Theorem \ref{main} hold. Assume $D \Subset\Omega$ with a $C^{2. \alpha}$-boundary for some $\alpha\in (0,1)$. Then for any $f\in \mathcal {B}^{\alpha}(D)\cap C^2(D)$, where $\mathcal {B}^{\alpha}(D):=\{u\in C^{\alpha}(\overline D):u=0\text{ on } \partial D\}$, we have
$$
\mathcal{L} G_D f=G_D \mathcal{L}  f=-f\quad \text{in}\ D.
$$
\end{lem}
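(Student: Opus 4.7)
The plan is to prove the two equalities separately, handling $\mathcal{L} G_D f = -f$ via Schauder theory combined with Dynkin's formula, and $G_D \mathcal{L} f = -f$ by a direct application of Dynkin's formula.

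For the first identity, I would start by solving the Dirichlet problem $\mathcal{L} v = -f$ in $D$, $v = 0$ on $\partial D$. Since $\partial D \in C^{2,\alpha}$, the coefficients of $\mathcal{L}$ satisfy (A1)--(A2), and $f \in C^\alpha(\bar D)$ by $\mathcal{B}^\alpha(D)$, classical Schauder theory (see, e.g., \cite[Ch. 6]{Pi95}) produces a unique solution $v \in C^{2,\alpha}(\bar D)$. Next I would identify $v$ with $G_D f$ probabilistically. Since $v \in C^2(D)$, the process
$$
M_t := v(X_t) - v(X_0) - \int_0^t \mathcal{L} v(X_s)\, ds
$$
is a local martingale under each $\P_x$ by the martingale problem formulation for $\cL$. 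Because $D \Subset \Omega$ and $X^D$ is transient, $\E_x[T_D] = G_D{\bf 1}(x) < \infty$ by \eqref{g_e}, and $\mathcal{L} v = -f$ is bounded on $\bar D$; this justifies optional stopping at $T_D$. Combined with $v(X_{T_D})=0$, we obtain
$$
v(x) = -\E_x\!\left[\int_0^{T_D}\mathcal{L} v(X_s)\,ds\right] = \E_x\!\left[\int_0^{T_D} f(X_s)\,ds\right] = G_D f(x),
$$
which gives $\mathcal{L} G_D f = \mathcal{L} v = -f$ in $D$.

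For the second identity, the same Dynkin-type argument applied to $f$ itself gives the result. Since $f \in C^2(D)\cap \mathcal{B}^\alpha(D)$ is zero on $\partial D$ and $X$ has continuous sample paths,
$$
-f(x) = \E_x[f(X_{T_D})] - f(x) = \E_x\!\left[\int_0^{T_D}\mathcal{L} f(X_s)\,ds\right] = G_D \mathcal{L} f(x).
$$

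The main obstacle is justifying the optional stopping step for $f$, because $f$ is assumed only $C^2$ on the open set $D$, so $\mathcal{L} f$ need not be bounded up to $\partial D$. The standard remedy is to localize: choose an exhausting sequence of subdomains $D_n \Subset D$ with $D_n \uparrow D$ and $C^{2,\alpha}$-boundaries, apply Dynkin's formula on each $D_n$ (where $f \in C^2(\bar{D_n})$ and $\mathcal{L} f$ is bounded), and pass to the limit using path continuity together with $T_{D_n}\uparrow T_D$, $f\in C^\alpha(\bar D)$ with $f|_{\partial D}=0$ (so that $f(X_{T_{D_n}})\to 0$), and dominated convergence on the right-hand side (using the integrability of $T_D$ together with local bounds on $\mathcal{L} f$). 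Uniqueness of the Dirichlet problem in Step 1 also ensures that any $C^{2,\alpha}(\bar D)$ function with $\mathcal{L} u = -f$ and zero boundary values must coincide with $G_D f$, which is another way to interlock the two identities.
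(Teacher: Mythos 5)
Your argument is correct and reaches the same conclusion, but it takes a more probabilistic route than the paper for both halves. For $\mathcal{L}G_Df=-f$, the paper simply invokes \cite[Theorem 3.6.4]{Pi95}, which states directly that $G_Df$ is the unique solution of the Dirichlet problem $\mathcal{L}u=-f$ in $D$, $u=0$ on $\partial D$; your Schauder-plus-Dynkin identification of the PDE solution with $G_Df$ essentially reproves that citation, which is fine but more work than needed. For $G_D\mathcal{L}f=-f$, the paper's trick is purely PDE-theoretic: it observes that $-f$ itself solves the Dirichlet problem with data $\mathcal{L}f$, and since $G_D\mathcal{L}f$ is the unique such solution, the two must coincide --- this is exactly the ``interlocking via uniqueness'' you mention in your last sentence but do not develop; your main route instead applies Dynkin's formula to $f$ and uses $f(X_{T_D})=0$. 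Both routes hinge on the same delicate point, namely the behaviour of $\mathcal{L}f$ up to $\partial D$: the paper asserts without justification that $\mathcal{L}f\in C^{\alpha}(\overline D)$ and vanishes on $\partial D$ (which does not follow from $f\in\mathcal{B}^{\alpha}(D)\cap C^{2}(D)$ alone), while you flag the issue honestly and propose localization; note, though, that your dominated-convergence step still needs an integrable dominating function for $\mathcal{L}f$ near $\partial D$, which ``local bounds'' do not supply, so your fix is incomplete in exactly the place where the paper is also glossing. Net effect: your approach is self-contained and makes the probabilistic content explicit, while the paper's is shorter because it leans on Pinsky's existence--uniqueness theorem twice.
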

	
\begin{proof}
From \cite[Theorem 3.6.4]{Pi95}, we see that $G_D f$ is the unique solution of
$$
\begin{cases}
\mathcal{L}u=-f,&\ \text{in}\ D;\\
u=0,&\ \text{on}\ \partial D,
\end{cases}
$$
that is, $\mathcal{L} G_D f=-f$ in $D$.
On the other hand, it is obvious that  $\mathcal{L}f$ belongs to $C^\alpha(\bar{D})$ and vanishes at $\partial D$.
Let $h=G_D\mathcal{L}f$. Then $h$ is the unique solution of
$$
\begin{cases}
\mathcal{L}u=-\mathcal{L}f,&\ \text{in}\ D;\\
u=0,&\ \text{on}\ \partial D.
\end{cases}
$$
Since $-f$ also solves the above equation, the uniqueness yields $h=-f$ which conclude our assertion.
\end{proof}

Nextly we will prove the maximum and minimum principles of the Green functions using the following lemma which is from \cite[Theorem 3.1]{GT01} or \cite[Theorem 3.2.1]{Pi95}.

\begin{lem}\label{prop4.2}
Let the conditions of Theorem \ref{main} hold. For any domain $D\subset \Omega$,  suppose that $u\in C^2(D)\cap C(\bar{D})$ satisfies $\mathcal{L}u\geq 0 \,(\leq 0 \,,\text{respectively})$ in $D$. Then the maximum (minimum, respectively) of $u$ in $\overline {D}$ is achieved on $\partial  D$, that is,
$$\sup_{D}u=\sup_{\partial D}u \,(\inf_{D}u=\inf_{\partial D}u \,,\text{respectively} ).$$
\end{lem}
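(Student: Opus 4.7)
The plan is to deduce this weak maximum/minimum principle from the classical theory of uniformly elliptic operators in non-divergence form, after rewriting $\cL$ in that form. By replacing $u$ with $-u$, it suffices to handle the maximum statement under $\cL u \ge 0$.

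First I would expand the divergence form. Under (A1) we may write
\[
\cL u = \sum_{i,j} a_{ij}(x)\,\partial_{ij} u + \sum_j \tilde b_j(x)\,\partial_j u,
\qquad \tilde b_j(x):= \sum_i \partial_{x_i}a_{ij}(x)+b_j(x),
\]
where the hypotheses (A1)--(A2) guarantee that $\a$ is symmetric with $\a \ge \lambda I$ on $\bar D$ and $\tilde b$ is continuous, hence bounded on the bounded set $\bar D\subset\Omega$. So $\cL$ is a standard uniformly elliptic operator with bounded first-order coefficients, exactly the setting of the classical weak maximum principle.

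Next I would handle the strict case: if $\cL u>0$ in $D$, then $u$ cannot attain its supremum at an interior point $x_0\in D$, because at such a point $\nabla u(x_0)=0$ and the Hessian $H:=(\partial_{ij}u(x_0))$ is negative semidefinite, so by simultaneous diagonalization with the positive-definite matrix $\a(x_0)$ one gets $\sum_{i,j}a_{ij}(x_0)H_{ij} \le 0$, contradicting $\cL u(x_0)>0$. Since $u\in C(\bar D)$ and $\bar D$ is compact, the supremum is then attained on $\partial D$.

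To pass from $\cL u \ge 0$ to the strict case I would use the standard perturbation trick: set $w_\veps(x):=u(x)+\veps e^{\gamma x_1}$ for $\veps>0$. Then
\[
\cL w_\veps = \cL u + \veps e^{\gamma x_1}\bigl(\gamma^2 a_{11}(x)+\gamma\,\tilde b_1(x)\bigr).
\]
Using $a_{11}\ge \lambda$ and the boundedness of $\tilde b_1$ on $\bar D$, I choose $\gamma$ so large that $\gamma^2 a_{11}+\gamma\tilde b_1 >0$ on $\bar D$; this gives $\cL w_\veps>0$ strictly in $D$. By the strict case, $\sup_D w_\veps=\sup_{\partial D} w_\veps$, and letting $\veps\to 0^+$ yields $\sup_D u\le \sup_{\partial D}u$, with the reverse inequality trivial. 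The minimum statement follows by applying the maximum case to $-u$.

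The only real subtlety is justifying the reduction to the non-divergence form and verifying that (A1)--(A2) indeed give uniform ellipticity together with bounded lower-order coefficients on $\bar D$; once this is in hand, the proof is the textbook argument (as noted in the statement, this is exactly \cite[Theorem 3.1]{GT01} or \cite[Theorem 3.2.1]{Pi95}), so no further obstacle should arise.
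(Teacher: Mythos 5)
Your proof is correct and is exactly the classical argument behind the result the paper invokes: the paper gives no proof of its own, simply citing the weak maximum principle of \cite[Theorem 3.1]{GT01} (equivalently \cite[Theorem 3.2.1]{Pi95}), and your reduction to non-divergence form, the interior-maximum contradiction for $\cL u>0$, and the $\veps e^{\gamma x_1}$ perturbation are precisely that textbook proof. No discrepancy to report.
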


 \begin{lem}\label{lem5.3}
Let the conditions of Theorem \ref{main} hold, and let domains
 $U\Subset V\Subset \Omega$.
For $x_0\in U$, we have that
\begin{align}
\inf_{U\backslash\{x_0\}} g_{ V}(\cdot,x_0)
=\inf_{\partial U} g_{ V}(\cdot,x_0)
\q \text{ and }\q
\sup_{ V\backslash U} g_{ V}(\cdot,x_0)
=\sup_{\partial U} g_{ V}(\cdot,x_0)\label{(5.20)}.
\end{align}
Similarly, for $x_0\in U$, we have that
\begin{align}
\inf_{U\backslash\{x_0\}} g_{ V}^*(\cdot,x_0)
=\inf_{\partial U} g_{ V}^*(\cdot,x_0)\q \text{ and }\q\sup_{ V\backslash U} g_{ V}^*(\cdot,x_0)
=\sup_{\partial U} g_{ V}^*(\cdot,x_0)\label{eq:adj(5.20)}.
\end{align}
\end{lem}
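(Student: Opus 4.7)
The proof proposal is to combine the $\mathcal{L}$-harmonicity of $g_V(\cdot,x_0)$ (and the $\mathcal{L}^*$-harmonicity of $g_V^*(\cdot,x_0)$) on $V\setminus\{x_0\}$, supplied by Lemma \ref{Thm425}, with the maximum/minimum principle of Lemma \ref{prop4.2}. The four identities follow from the same scheme, so I sketch the two for $g_V$; the dual identities reduce to the same argument after replacing $\mathcal{L}$ by $\mathcal{L}^*$, which is again uniformly elliptic in divergence form under (A1)--(A2), so Lemma \ref{prop4.2} applies verbatim.

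For the infimum identity, I would excise the pole and apply the minimum principle on the punctured domain $U_r := U\setminus\overline{B_r(x_0)}$ for small $r>0$. On $U_r$ the function $g_V(\cdot,x_0)$ is $C^2$ and $\mathcal{L}$-harmonic by Lemma \ref{Thm425}(2), and continuous on $\overline{U_r}$ by Lemma \ref{Thm425}(1), so Lemma \ref{prop4.2} gives
$$
\inf_{\overline{U_r}} g_V(\cdot,x_0) \;=\; \min\Bigl\{\inf_{\partial U} g_V(\cdot,x_0),\ \inf_{\partial B_r(x_0)} g_V(\cdot,x_0)\Bigr\}.
$$
Since $g_V(y,x_0)\to\infty$ as $y\to x_0$ (the classical $|y-x_0|^{2-d}$ blow-up for divergence-form elliptic Green functions under (A1)--(A2)) while $\inf_{\partial U} g_V(\cdot,x_0)$ is a fixed positive number by the joint continuity in Lemma \ref{Thm425}(1), for all sufficiently small $r$ the minimum is attained on $\partial U$. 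Letting $r\downarrow 0$ yields the claimed equality on $U\setminus\{x_0\}$.

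For the supremum identity, I would apply the maximum principle directly on $W := V\setminus\overline{U}$, an open set whose closure avoids the pole $x_0$. There $g_V(\cdot,x_0)$ is $C^2$, $\mathcal{L}$-harmonic by Lemma \ref{Thm425}(2), and continuous up to $\partial W = \partial V\cup\partial U$; the Dirichlet character of the killed Green function forces $g_V(\cdot,x_0)\equiv 0$ on $\partial V$, while $g_V(\cdot,x_0)>0$ on $\partial U$ by Lemma \ref{Thm425}(1). Lemma \ref{prop4.2} then gives $\sup_{\overline W} g_V(\cdot,x_0) = \sup_{\partial U} g_V(\cdot,x_0)$, and combining with the set identity $V\setminus U = W\cup\partial U$ (valid because $U\Subset V$) finishes the argument. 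The main obstacle is supplying the two pole/boundary facts used in each reduction---Dirichlet vanishing of $g_V(\cdot,x_0)$ on $\partial V$ and the $|y-x_0|^{2-d}$ blow-up at the pole---both of which are classical under the $C^{1,\alpha}$ coefficient regularity (A1) and the $C^{2,\alpha}$ boundary regularity of $V$, and can be extracted from the Dirichlet theory in \cite{Pi95}; once they are in hand, the lemma is an immediate application of the principles already established.
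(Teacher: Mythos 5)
Your argument is correct in substance, but it takes a genuinely different route from the paper's on the supremum identity, and it imports two classical facts the paper never needs. For $\inf_{U\backslash\{x_0\}} g_V(\cdot,x_0)=\inf_{\partial U}g_V(\cdot,x_0)$ the paper simply invokes harmonicity on $V\backslash\{x_0\}$ plus Lemma \ref{prop4.2} and says the identity is clear; your punctured-domain argument on $U\backslash\overline{B_r(x_0)}$ is the honest version of this, but it genuinely requires the extra input that $g_V(y,x_0)\to\infty$ (or at least that $\liminf_{y\to x_0}g_V(y,x_0)\ge\inf_{\partial U}g_V(\cdot,x_0)$) as $y\to x_0$ -- positivity alone does not rule out the minimum being approached at the pole. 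That blow-up is classical for $d\ge 3$ under (A1)--(A2) and is a purely local fact, so there is no circularity with the global bounds $\CG$ being proved, but you should cite it explicitly (it is not contained in Lemmas \ref{Thm425} or \ref{prop4.2}). For the supremum identity the paper does something different: it writes, via \cite[Theorem 7.3.3]{Pi95}, $g_V(y,x_0)=\E_y[g_V(X_{\tau_{\bar U}},x_0)\1_{\{\tau_U<T_V\}}]\le\sup_{\partial U}g_V(\cdot,x_0)$ for $y\in V\backslash U$, which gives the bound with no information about the behaviour of $g_V(\cdot,x_0)$ near $\partial V$. Your alternative -- the maximum principle on $W=V\backslash\overline U$ -- needs $g_V(\cdot,x_0)\in C(\overline W)$ with $g_V(\cdot,x_0)=0$ on $\partial V$, i.e.\ continuity up to and Dirichlet vanishing at $\partial V$. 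Lemma \ref{Thm425}(1) only gives interior continuity, and the lemma as stated assumes no boundary regularity of $V$; your step is fine in every application in the paper (where $V$ is a ball, hence $C^{2,\alpha}$), but it is strictly weaker than the probabilistic representation, which is the cleaner tool here. With those two classical inputs supplied (and the harmless remark that $\partial U\subset\overline W$ for the smooth domains in question, so that $\sup_{V\backslash U}=\sup_{\overline W}$), your proof closes.
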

\begin{proof}
Since the proof of \eqref{eq:adj(5.20)} is same as that of \eqref{(5.20)}, we will only give the proof of \eqref{(5.20)}.
Fix $x_0\in U$.
Since $g_V(\cdot, x_0)$ is harmonic on $V\backslash\{x_0\}$ by Lemma \ref{Thm425}(2), applying Lemma \ref{prop4.2}, clearly we have the first equality of \eqref{(5.20)}.
For the second equality of \eqref{(5.20)},  it is enough to show that
\begin{align*}
\sup_{ V\backslash U} g_{ V}(\cdot,x_0)\le\sup_{\partial U} g_{ V}(\cdot,x_0)
\end{align*}
because of Lemma \ref{prop4.2}.
Indeed it is true since \cite[Theorem 7.3.3]{Pi95} implies that  for any $y\in V\backslash U$
\begin{align*}
g_V (y,x_0)=\E_y[g_V(X_{\tau_{\bar{U}}},x_0)1_{\{\tau_{U}<T_V\}}]
\le \sup_{\partial U}g_V(\cdot,x_0)\P_y(\tau_{U}<T_V)\le \sup_{\partial U}g_V(\cdot,x_0).
\end{align*}
\end{proof}

\subsection{Capacity and equilibrium measure}
In this subsection, we give some properties of the capacity for any $A, B\subset \Omega$ satisfying $\ass {S}$.
For brevity, we do not repeat this condition on $A, B$ and the conditions \eqref{gnrt} and (A1)--(A2) of $\cL$ in the subsection.

\begin{lem}
The capacity is invariant  with respect to the adjoint operator in the following sense:
	$$
	\ccap(A,B)=\ccap^*(A,B).
	$$
\end{lem}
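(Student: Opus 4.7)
The plan is to apply Green's identity on $D := \Omega\setminus(\bar A\cup\bar B)$ to rewrite each capacity as an explicit bilinear expression in $h:=h_{A,B}$ and $h^*:=h^*_{A,B}$, and then to observe that the difference vanishes by the invariance of $\mu$. The common ingredients are: the shared boundary data ($h = h^* = 1$ on $\partial A$, $h = h^* = 0$ on $\partial B$); the harmonicities $\cL h = 0$ and $\cL^* h^* = 0$ on $D$; and the ``skew drift'' $c := \frac{\a\nabla\mu}{\mu} - b$, which satisfies $\nabla\cdot(c\mu) = 0$ on $\Omega$ because $\mu$ is $\cL$-invariant and which encodes the difference $\cL^* - \cL = 2c\cdot\nabla$.

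First, I would integrate $0 = -\int_D h^*\,\cL h\,d\mu$ by parts against $\mu$ on $D$ to get
\[
0 \;=\; \int_D \nabla h^*\cdot\a\nabla h\,d\mu \;+\; \int_D h^*\,c\cdot\nabla h\,d\mu \;-\; \int_{\partial D} h^*\,(\a\nabla h)\cdot\n_D\,\mu\,d\sigma,
\]
where $\n_D$ is the outward unit normal to $D$. The boundary data of $h^*$ collapse the surface integral to $\int_{\partial A}(\a\nabla h)\cdot\n_D\,\mu\,d\sigma$; repeating the same integration by parts with $h$ in place of $h^*$ identifies this surface integral as the distributional meaning of $-\langle h,\cL h\rangle_\mu = \ccap(A,B)$. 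Hence
\[
\ccap(A,B) \;=\; \int_D \nabla h^*\cdot\a\nabla h\,d\mu \;+\; \int_D h^*\,c\cdot\nabla h\,d\mu.
\]
The mirror computation starting from $0 = -\int_D h\,\cL^* h^*\,d\mu$, tracking that the drift of $\cL^*$ is $b+2c$, yields
\[
\ccap^*(A,B) \;=\; \int_D \nabla h\cdot\a\nabla h^*\,d\mu \;-\; \int_D h\,c\cdot\nabla h^*\,d\mu.
\]

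Subtracting and using the symmetry of $\a$ to cancel the principal terms leaves
\[
\ccap(A,B) - \ccap^*(A,B) \;=\; \int_D c\cdot\nabla(h h^*)\,d\mu.
\]
One further integration by parts, combined with $\nabla\cdot(c\mu) = 0$, converts this into a pure boundary integral,
\[
\int_D c\cdot\nabla(h h^*)\,d\mu \;=\; \int_{\partial D} h h^*\,c\cdot\n_D\,\mu\,d\sigma \;=\; \int_{\partial A} c\cdot\n_D\,\mu\,d\sigma,
\]
using $hh^* = 1$ on $\partial A$ and $hh^* = 0$ on $\partial B$. Applying the divergence theorem inside $A$ (where $\n_A = -\n_D$ on $\partial A$) and invoking $\nabla\cdot(c\mu) = 0$ once more shows this last integral equals $-\int_A \nabla\cdot(c\mu)\,dx = 0$, which closes the argument.

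The main obstacle is the careful bookkeeping: one must separate symmetric and skew-symmetric contributions so that the cross terms $\int_D \nabla h\cdot\a\nabla h^*\,d\mu$ cancel in the difference, and the remaining first-order terms collect precisely into $\int_D c\cdot\nabla(hh^*)\,d\mu$. Once this is done, the invariance of $\mu$ (in the single form $\nabla\cdot(c\mu) = 0$) carries out the final cancellation essentially for free.
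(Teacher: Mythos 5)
Your argument is correct, but it takes a genuinely different (more computational) route than the paper. The paper's proof is a short chain of equalities: it observes that $-\cL h$ pairs only against the values of the test function on $\bar A$ (since $\cL h=0$ off $\bar A\cup\bar B$ and $h=0$ on $\bar B$), swaps $h$ for $h^*$ there because both equal $1$ on $\bar A$, and then invokes the abstract duality $\langle h^*,\cL h\rangle_\mu=\langle \cL^* h^*,h\rangle_\mu$ before localizing to $\bar A$ once more. You instead expand everything via Green's identity on $D=\Omega\setminus(\bar A\cup\bar B)$, express each capacity as a Dirichlet-form cross term plus a drift term, cancel the symmetric parts, and kill the antisymmetric remainder $\int_D c\cdot\nabla(hh^*)\,d\mu$ using $\nabla\cdot(c\mu)=0$. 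What your version buys is transparency: the paper's duality step $\langle h^*,\cL h\rangle_\mu=\langle\cL^*h^*,h\rangle_\mu$ is applied to functions whose gradients jump across $\partial A$ and $\partial B$, so justifying it rigorously requires exactly the boundary-term bookkeeping you carry out; you also isolate the single structural fact ($c\mu$ is divergence-free) responsible for the cancellation. What it costs is length and a few extra implicit hypotheses, all of which the paper also uses silently elsewhere: symmetry of $\a$ (needed to cancel $\int_D\nabla h^*\cdot\a\nabla h\,d\mu$ against $\int_D\nabla h\cdot\a\nabla h^*\,d\mu$, and already used in the paper's \eqref{eq:c1}), the absence of boundary contributions from $\partial\Omega$, and the identification of the flux integral $\int_{\partial A}(\a\nabla h)\cdot\n_D\,\mu\,d\sigma$ with $\ccap(A,B)$ --- which is precisely the equilibrium-measure identity \eqref{h-nu0} of Lemma \ref{lem6.5}; your sketch of that identification (``the distributional meaning of $-\langle h,\cL h\rangle_\mu$'') is the one step I would spell out, since the Lemma as ordered in the paper precedes Lemma \ref{lem6.5}.
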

\begin{proof}
	Denote by $h=h_{A,B}$ and $h^*=h^*_{A,B}$ the solutions to \eqref{pe} for $\cL$ and $\cL^*$, respectively.  Since $h=h^*=1$ on $\bar{A}$ and $h=h^*=0$ on $\bar B$, by the definition of capacities, we have
	$$
	\aligned
\ccap(A,B)&=-\langle h,\cL h\rangle_\mu
=\int_{\bar{A}}h (-\cL  h) d\mu=\int_{\bar{A}}h^* (-\cL  h) d\mu\nn\\
&=-\langle h^*,\cL  h\rangle_\mu=-\langle \cL^*  h^*, h\rangle_\mu = \int_{\bar{A}}(-\cL^* h^*)  h d\mu \nn\\
& =\int_{\bar{A}}(-\cL^* h^*)  h^* d\mu=-\langle  h^*, \cL^*h^*\rangle_\mu=\ccap^*(A,B).
\endaligned
	$$	
\end{proof}

In the following lemma, we introduce an equilibrium measure corresponding to the Poisson equation defined in \eqref{pe} which describes the capacity well.

\begin{lem}\label{lem6.5}
Let $h=h_{A, B}$ be the solution to \eqref{pe}  for $\cL$. Consider a measure
$$
\nu(dy):=\nu_{A, B}(dy):=-\a\nabla h\cdot {\bf n}_A \mu(dy)
$$
on $\partial A$ where ${\bf n}_A$ is the outward normal vector to $\partial A$. Then $\nu(dy)$ is a measure, which is called equilibrium measure, on $\partial A$ such that
	\begin{equation}\label{h-nu0}
	\ccap(A, B)=\nu(\partial A).
	\end{equation}
Also
	\begin{equation}
	\label{h-nu}
	h(x)=\int_{\partial A}\frac{g_{\overline B^c}(x,y)}{\mu(y)}\nu(dy) \qquad \text{for all}\ x\in \overline B^c\backslash \partial A
	\end{equation}
and in particular,
	\begin{equation}\label{h-nu1}
	\int_{\partial A}\frac{g_{\overline B^c}(x,y)}{\mu(y)}\nu(dy)=1 \qquad \text{for all}\ x\in A.
	\end{equation}
\end{lem}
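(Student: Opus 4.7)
The plan is to verify positivity of $\nu$ first, then establish \eqref{h-nu0}, \eqref{h-nu}, \eqref{h-nu1} in that order. Setting $D:=(\overline A\cup\overline B)^c$, the function $h$ is $\cL$-harmonic on $D$ with $h|_{\partial A}=1$ and $h|_{\partial B}=0$. The maximum principle (Lemma \ref{prop4.2}) gives $h\le 1$ on $\overline D$, the strong maximum principle upgrades this to $h<1$ in the interior of $D$, and the Hopf boundary-point lemma---applicable because $\partial A\in C^{2,\alpha}$---then yields a strictly positive outward normal derivative of $h$ relative to $D$ at every point of $\partial A$. The outward normal to $D$ along $\partial A$ is $-\mathbf{n}_A$, so this reads $\nabla h\cdot \mathbf{n}_A<0$. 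Since $h\equiv 1$ on $\partial A$ its tangential gradient vanishes there, so $\nabla h=c\,\mathbf{n}_A$ with $c<0$; the ellipticity assumption (A2) then gives $\a\nabla h\cdot \mathbf{n}_A = c\,\mathbf{n}_A^\top \a\mathbf{n}_A \le \lambda c<0$, so $-\a\nabla h\cdot \mathbf{n}_A>0$ on $\partial A$ and $\nu$ is a strictly positive surface measure.

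For \eqref{h-nu0}, I would identify $\cL h$ as a distribution on $\Omega$. Extending $h$ by its prescribed values on $\overline A\cup \overline B$ makes $h$ Lipschitz on $\Omega$, with $\nabla h\equiv 0$ on $A\cup B$ and $\nabla h$ classical on $D$; consequently $\cL h$ vanishes in each of the three open pieces pointwise (using $\cL h=0$ on $D$), and only jump contributions from the divergence-form term $\nabla\cdot(\a\nabla h)$ across $\partial A$ and $\partial B$ survive. A short test-function calculation gives
\[
\cL h \;=\; (\a\nabla h\cdot \mathbf{n}_A)\,d\sigma\big|_{\partial A} \;+\; (\a\nabla h\cdot \mathbf{n}_B)\,d\sigma\big|_{\partial B},
\]
with $\nabla h$ the trace from the $D$-side (the drift $b\cdot\nabla h$ contributes no singular part). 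Pairing against $h\mu$ and using $h\equiv 1$ on $\partial A$, $h\equiv 0$ on $\partial B$ produces
\[
-\langle h,\cL h\rangle_\mu=-\int_{\partial A}\a\nabla h\cdot \mathbf{n}_A\,\mu\,d\sigma=\nu(\partial A),
\]
which is \eqref{h-nu0}.

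For \eqref{h-nu}, set $D:=\overline B^c$. Restricting the identification above to $D$ gives $\cL h=-\nu/\mu$ as a measure supported on $\partial A$, together with the Dirichlet condition $h=0$ on $\partial D=\partial B$. Fix $x\in D\setminus \partial A$ and test this against $v(y):=g^*_D(y,x)$, which by Lemma \ref{Thm425}(3) satisfies $\cL^* v=-\delta_x$ on $D$ and $v=0$ on $\partial B$. The adjointness $\langle \cL h,v\rangle_\mu=\langle h,\cL^* v\rangle_\mu$---free of $\partial D$ boundary contributions because $h$ and $v$ both vanish on $\partial B$---reduces to
\[
-\int_{\partial A}g^*_D(y,x)\,\nu(dy) \;=\; -\mu(x)\,h(x).
\]
Solving for $h(x)$ and invoking the duality $\mu(x)g^*_D(y,x)=\mu(y)g_D(x,y)$ that follows from \eqref{dual:k} yields \eqref{h-nu}. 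Finally, \eqref{h-nu1} is the immediate specialization $x\in A$, where $h(x)=1$.

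The main obstacle is making the adjointness step of the preceding paragraph rigorous, because $h$ has discontinuous normal derivatives across $\partial A$ and $g^*_D(\cdot,x)$ is singular at $x$. The standard workaround is to apply Green's second identity on the regularized domain $D\setminus(\overline A\cup B_\varepsilon(x))$ and send $\varepsilon\downarrow 0$: the $\partial B$ boundary integral vanishes by the shared Dirichlet condition, the $\partial A$ boundary integral collapses to $-\int_{\partial A}g^*_D(y,x)\,\nu(dy)$ once one notes that $\nabla h\equiv 0$ on the $\overline A$-side, and the fundamental-solution asymptotics of $g^*_D(\cdot,x)$ near $x$ deliver the contribution $-\mu(x)h(x)$ in the limit. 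The non-reversibility enters only through the extra boundary pieces generated by the drift $b$, which match the corresponding terms from $\cL^*$ and cancel by the adjointness of $\cL$ and $\cL^*$ in $L^2(\mu)$.
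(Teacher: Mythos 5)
Your proof is correct and follows essentially the same route as the paper: both arguments identify $\cL h$, distributionally, with the conormal surface measure $-\nu/\mu$ concentrated on $\partial A$ via the divergence theorem, obtain \eqref{h-nu0} by pairing with $h\mu$, and obtain \eqref{h-nu} by pairing with the Green function (the paper applies $G_{\overline B^c}$ to $\cL h$ through Lemma \ref{LG=GL}, you pair with $g_{\overline B^c}^*(\cdot,x)$ and invoke $\cL^* g_{\overline B^c}^*(\cdot,x)=-\delta_x$ — the same Green identity either way), your Hopf-lemma positivity argument being a welcome addition that the paper omits. One small slip: integrating \eqref{dual:k} in $t$ gives $\mu(y)\,g_{\overline B^c}^*(y,x)=\mu(x)\,g_{\overline B^c}(x,y)$, not $\mu(x)\,g_{\overline B^c}^*(y,x)=\mu(y)\,g_{\overline B^c}(x,y)$ as you wrote; with the correct form, your identity $h(x)=\mu(x)^{-1}\int_{\partial A}g_{\overline B^c}^*(y,x)\,\nu(dy)$ turns into exactly \eqref{h-nu}, whereas the version as written would not.
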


\begin{proof}
Since $h\equiv1$ on $\bar{A}$ by the definitions of $h$, we have that
\begin{align}\label{eq:c}
\ccap(A, B)&=-\int_{\mathbb{R}^d}\,h\mathcal{L}hd\mu=-\int_{\bar{A}}\mu\nabla\cdot(\a\nabla h)dx-\int_{\bar{A}}\mu b\cdot\nabla hdx.
\end{align}
Using the divergence theorem with the fact that $h\equiv1$ on $\bar{A}$, we first observe that
\begin{align}\label{eq:c1}
\int_{\partial A}\mu \a\nabla h\cdot {\bf n}_A dx
&=\int_{\bar{A}}\nabla\cdot(\mu \a\nabla h)dx
=\int_{\bar{A}}\nabla\mu\cdot\a\nabla hdx+\int_{\bar{A}}\mu\nabla\cdot(\a\nabla h)dx\nn\\
&=\int_{\bar{A}}\nabla\cdot\big(h\a\nabla\mu\big)dx
-\int_{\bar{A}}\nabla\cdot\big(\a\nabla\mu\big)dx+\int_{\bar{A}}\mu\nabla\cdot(\a\nabla h)dx\nn\\
&=\int_{\partial A}\a\nabla\mu\cdot {\bf n}_A dx
-\int_{\partial A}\a\nabla\mu\cdot {\bf n}_Adx+\int_{\bar{A}}\mu\nabla\cdot(\a\nabla h)dx\nn\\
&=\int_{\bar{A}}\mu\nabla\cdot(\a\nabla h)dx.
\end{align}
Similarly we have that	
\begin{align*}
\int_{\partial A}\mu b\cdot{\bf n}_A dx
&=\int_{\bar{A}}\nabla\cdot(h\mu b)dx=\int_{\bar{A}}\nabla\cdot(\mu b)dx+\int_{\bar{A}}\mu b\cdot\nabla hdx\nn\\
&= \int_{\partial A}\mu b\cdot{\bf n}_A dx +\int_{\bar{A}}\mu b\cdot\nabla hdx,
\end{align*}
and therefore
\begin{align}\label{eq:c2}
\int_{\bar{A}}\mu b\cdot\nabla hdx=0.
\end{align}
By \eqref{eq:c}--\eqref{eq:c2}, we obtain the first assertion \eqref{h-nu0}.

Similarly using the divergence theorem, we can obtain that for any $x\in \overline B^c\backslash \partial A$,
\begin{equation}\label{h-nu:U1}
\aligned
&\quad\int_{\partial A}\frac{g_{\overline B^c}(x,\cdot)}{\mu(\cdot)}\nu(dy)=-\int_{\partial A}g_{\overline B^c}(x,\cdot)\a\nabla h\cdot{\bf n}_A dy=-\int_{\bar{A}} \nabla\cdot\big(g_{\overline B^c}(x,\cdot)\a\nabla h\big)dy\\
&=-\int_{\bar{A}}\nabla g_{\overline B^c}(x,\cdot)\cdot \a\nabla h dy-\int_{\bar{A}}g_{\overline B^c}(x,\cdot)\nabla\cdot (\a\nabla h) dy\\
&=-\int_{\bar{A}}\nabla\cdot\Big(h\a\nabla g_{\overline B^c}(x,\cdot)\Big)+h\nabla\cdot\Big(\a\nabla g_{\overline B^c}(x,\cdot)\Big)dy-\int_{\bar{A}}g_{\overline B^c}(x,\cdot)\nabla\cdot (\a\nabla h) dy\\
&=-\int_{\bar{A}}g_{\overline B^c}(x,\cdot)\nabla\cdot (\a\nabla h) dy.
\endaligned
\end{equation}
The last equality comes from the fact that $h=1$ on $\bar{A}$.
On the other hand, since $G_{\overline B^c}\cL h=-h$ from Lemma \ref{LG=GL}, and  $\mathcal{L}h=0$ on $(\overline {A}\cup \overline{B})^c$, we have that
\begin{equation}\label{h-nu:U2}
\aligned
h(x)&=-\int_{\overline B^c} g_{\overline B^c}(x,\cdot)\mathcal{L}hdy
=-\int_{\bar{A}} g_{\overline B^c}(x,\cdot)\big[\nabla\cdot(\a\nabla h)+b\cdot\nabla h\big]dy\\
&=-\int_{\bar{A}} g_{\overline B^c}(x,\cdot)\nabla\cdot(\a\nabla h)dy.
\endaligned
\end{equation}
The above last equality holds since
$$
\aligned
\int_{\bar{ A}} g_{\overline B^c}(x,y)b\cdot\nabla hdy=\int_{\bar{A}}\nabla\cdot\big(h g_{\overline B^c}(x,y)b\big)-h\nabla\cdot\big(g_{\overline B^c}(x,y)b\big)dy=0\ .
\endaligned
$$
Thus combining \eqref{h-nu:U1}--\eqref{h-nu:U2},  we obtain \eqref{h-nu}.
\end{proof}

We say that the elliptic Harnack inequality $\H$ holds on $\Omega$ if there exist constants $C\ge  1$ and $ \delta\in (0, 1)$ such that for any ball $B\subset \Omega$ of radius $R>0$, and for any harmonic and non-negative function $u$ in $B$, the following inequality holds:
	\begin{align*}
	\H: \qquad \esssup_{\delta B} u\le C \essinf_{\delta B} u.
	\end{align*}
The constants $C$ and $\delta$ are independent of the ball $B$ and the function $u$. Similarly, we denote by $\H^*$ the corresponding condition to the adjoint process $X^*$.

The following lemma is from \cite[Theorem 3.1]{S80}, which shows that the elliptic Harnack inequality holds for our processes $X$ and $X^*$.
\begin{lem}\label{Har}
For diffusions $X$ and $X^*$, $\H$ and $\H^*$ hold.
\end{lem}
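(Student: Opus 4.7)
The plan is to verify that both $\cL$ and $\cL^*$ satisfy the hypotheses of a classical elliptic Harnack inequality for second-order divergence-form operators with bounded measurable lower-order coefficients, and then to invoke \cite[Theorem 3.1]{S80} twice.

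First I would handle $\cL$ directly. The operator $\cL=\nabla\cdot\a\nabla+b\cdot\nabla$ is already in the required divergence form. By (A2), $\a$ is uniformly positive definite on $\Omega$, and by (A1) the entries of $\a$ and of $b$ lie in $C^{1,\alpha}(\Omega)$; since $\Omega$ is bounded, these coefficients are in particular bounded, so all regularity/boundedness hypotheses of \cite[Theorem 3.1]{S80} are met. This directly yields $\H$ on any ball $B\subset\Omega$.

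Next I would handle $\cL^*=\nabla\cdot\a\nabla+(\tfrac{2}{\mu}\a\nabla\mu-b)\cdot\nabla$. The principal part is identical to that of $\cL$, so uniform ellipticity and regularity of the leading coefficients transfer immediately. The only new point is the dual drift $c:=\tfrac{2}{\mu}\a\nabla\mu-b$. Since $\mu$ is the invariant density of a positive recurrent diffusion with smooth coefficients, standard regularity (together with the adjoint equation $\cL^*\mu=0$ in distributional sense) gives $\mu\in C^{1,\alpha}(\Omega)$ and $\mu>0$ on $\Omega$; hence on any ball $B\Subset\Omega$ one has $\inf_B\mu>0$, and $c$ is bounded and H\"older continuous on $B$. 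Thus the same cited Harnack theorem applies to $\cL^*$ and gives $\H^*$.

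The main obstacle, and the only point that needs more than a routine verification, is the positivity of $\mu$ on subdomains where the inequality is to be applied: the factor $1/\mu$ in the dual drift must be well-defined and bounded for the cited theorem to apply. This is where one uses that $X$ is positive recurrent, $\mu$ is smooth, and the classical strong minimum principle (Lemma \ref{prop4.2} applied in its dual formulation) forces a positive nonconstant invariant density to be strictly positive throughout the bounded domain $\Omega$. Once this positivity is in hand, the two assertions $\H$ and $\H^*$ are simultaneous applications of \cite[Theorem 3.1]{S80}.
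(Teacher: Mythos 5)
Your proposal is correct and follows the same route as the paper, which simply cites \cite[Theorem 3.1]{S80} for both $X$ and $X^*$ without further comment. The extra care you take in verifying the hypotheses for the dual operator --- in particular the regularity and strict positivity of $\mu$ needed to control the drift $\tfrac{2}{\mu}\a\nabla\mu-b$ --- is a worthwhile elaboration of a step the paper leaves implicit, but it does not constitute a different argument.
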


From Lemma \ref{lem6.5}--\ref{Har}, we derive the relation between the capacity and the Green function.

\begin{lem}\label{lem7.2}
For any $x\in A$, we have
\be\label{(7.5)}
	\inf_{\partial A}g_{\overline B^c}^* (\cdot,x)\asymp \frac{\mu(x)}{\ccap(A,B)}\asymp \sup_{\partial A}  g_{\overline B^c}^* (\cdot,x).
	\de
\end{lem}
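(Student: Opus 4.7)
The plan is to first convert the identity \eqref{h-nu1} into an integral representation of $\mu(x)$ against the equilibrium measure $\nu$, and then close the comparison via the elliptic Harnack inequality $\H^*$ on a neighborhood of $\partial A$.

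For the first step I would exploit the Green-function duality. Integrating the second identity in \eqref{dual:k} over $t\in(0,\infty)$ with $D=\overline B^c$ yields $\mu(x)g^*_{\overline B^c}(x,y)=\mu(y)g_{\overline B^c}(y,x)$, and after renaming the variables this rearranges to
$$\frac{g_{\overline B^c}(x,y)}{\mu(y)}=\frac{g^*_{\overline B^c}(y,x)}{\mu(x)}.$$
Plugging this into \eqref{h-nu1} and recalling $\nu(\partial A)=\ccap(A,B)$ from \eqref{h-nu0} gives the key identity
$$\mu(x)=\int_{\partial A} g^*_{\overline B^c}(y,x)\,\nu(dy),$$
from which bounding the integrand by its pointwise supremum and infimum over $\partial A$ immediately produces
$$\inf_{\partial A} g^*_{\overline B^c}(\cdot,x)\;\le\;\frac{\mu(x)}{\ccap(A,B)}\;\le\;\sup_{\partial A} g^*_{\overline B^c}(\cdot,x).$$
This already furnishes one half of each $\asymp$ asserted in \eqref{(7.5)}.

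To reverse both inequalities it suffices to show the two-sided comparison $\sup_{\partial A}g^*_{\overline B^c}(\cdot,x)\asymp \inf_{\partial A}g^*_{\overline B^c}(\cdot,x)$. By Lemma \ref{Thm425}(3), the map $y\mapsto g^*_{\overline B^c}(y,x)$ is non-negative and $\cL^*$-harmonic on $\overline B^c\setminus\{x\}$. Since $x\in A$ has positive distance to $\partial A$ and, by Assumption \textbf{S}, $d(A,B)>0$, there is an open neighborhood $U$ of the compact set $\partial A$ contained in $\overline B^c\setminus\{x\}$ on which harmonicity holds. I would then cover $\partial A$ by a finite Harnack chain of balls lying inside $U$ (treating the connected components of $\partial A$ separately if necessary) and iterate the elliptic Harnack inequality $\H^*$ from Lemma \ref{Har} along the chain to obtain the required comparison, with a constant depending only on the geometry of $A$, $d(A,B)$, $d(x,\partial A)$, and the Harnack constant.

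The duality computation is routine; the only delicate step is the Harnack-chain construction, where one must ensure that consecutive balls overlap, remain inside $U$, and form a chain of bounded length. Compactness and $C^{2,\alpha}$-smoothness of $\partial A$ make this a standard argument, and the resulting (geometry-dependent) constant is exactly the $c>1$ appearing in the definition of $\asymp$ for \eqref{(7.5)}.
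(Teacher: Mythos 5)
Your proposal is correct and follows essentially the same route as the paper: the duality identity turns \eqref{h-nu1} into $\mu(x)=\int_{\partial A}g^*_{\overline B^c}(y,x)\,\nu(dy)$, which combined with $\nu(\partial A)=\ccap(A,B)$ gives the one-sided bounds, and the reverse comparison $\sup_{\partial A}g^*_{\overline B^c}(\cdot,x)\le c\,\inf_{\partial A}g^*_{\overline B^c}(\cdot,x)$ comes from harmonicity of $g^*_{\overline B^c}(\cdot,x)$ on $\overline B^c\setminus\{x\}$ together with the elliptic Harnack inequality $\H^*$. The only difference is that you spell out the Harnack-chain construction (and its dependence on the geometry of $A$, $d(A,B)$ and $d(x,\partial A)$) that the paper invokes in a single line.
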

\begin{proof}
Our first claim is that for any $x\in A$,
\begin{equation}\label{lem71}
	\inf_{\partial A} g_{\overline B^c}^* (\cdot,x)\le \frac{\mu(x)}{\ccap (A,B)}\le \sup_{\partial A} g_{\overline B^c}^* (\cdot,x).
\end{equation}
Indeed, from \eqref{h-nu0},  we have that for all $x\in  \overline B^c$,
\begin{align*}
\inf_{\partial A} g_{\overline B^c}^*(\cdot,x)\frac{\ccap(A,B)}{\mu(x)}
\le \int_{\partial A}\frac{ g_{\overline B^c}^*(y,x)}{\mu(x)}\nu(dy)\le\sup_{\partial A} g_{\overline B^c}^*(\cdot,x)\frac{\ccap(A,B)}{\mu(x)}.
\end{align*}
By \eqref{dual:k} and \eqref{h-nu1}, since
$$\int_{\partial A}\frac{ g_{\overline B^c}^*(y,x)}{\mu(x)}\nu(dy)=\int_{\partial A}\frac{g_{ \overline B^c}(x,y)}{\mu(y)}\nu(dy)=1\q\q\text
{for all $x\in A$}\, ,$$
we obtain \eqref{lem71}.
On the other hand, since $ g_{\overline B^c}^*(\cdot,x)$ is harmonic for $\cL^*$ on $\overline B^c\backslash \{x\}$ by Lemma \ref{Thm425}(3), Lemma \ref{prop4.2} and the condition $\H^*$ imply that
\be
\HG^*: \q\q \q\q\sup_{\partial A}  g_{\overline B^c}^* ( \cdot,x)\le c \inf_{\partial A}g_{\overline B^c}^*(\cdot,x)\nn
\de
for some constant $c>0$ which is independent of $A$ and $B$. Combining \eqref{lem71} and $\HG^*$ it gives the desired result.
\end{proof}

\section{Proof of Theorem \ref{main}}\label{proof}

\subsection{Implication $\CC\Rightarrow\CG$}

In this subsection, we will prove $\CC\Rightarrow\CG$ of our main Theorem \ref{main}.

\begin{lem}\label{Lem:ng}
	For any $x_0\in \Omega$ and $R>0$, define $B:=B(x_0, R)$ such that $B_n:=K^n B\subset\Omega$ where $K\ge 1$ is the constant in $\CC$.
	Then there is a constant $c>0$ such that for any $n\ge m\ge 0$
	\begin{align*}
	\sup_{\partial B_m} g_{B_n}^*(\cdot, x_0)\le  c \mu(x_0)\sum_{k=m}^{n-1} \left(\ccap(B_k, \overline B^c_{k+1}) \right)^{-1}.
	\end{align*}
\end{lem}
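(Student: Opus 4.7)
The plan is to telescope $\sup_{\partial B_m} g^*_{B_n}(\cdot, x_0)$ across the successive concentric spheres $\partial B_m, \partial B_{m+1}, \ldots, \partial B_{n-1}$, thereby reducing the global estimate to a sum of one-step bounds of the form $\sup_{\partial B_k} g^*_{B_{k+1}}(\cdot, x_0)$, each of which is immediately controlled by Lemma \ref{lem7.2}.

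For the telescoping step I would use the Green-function decomposition
$$
g^*_{B_n}(y,x_0) \;=\; g^*_{B_{m+1}}(y,x_0) + \E^*_y\!\left[ g^*_{B_n}\!\left(X^*_{T^*_{B_{m+1}}}, x_0\right)\right], \qquad y \in B_{m+1},
$$
obtained by decomposing the killed transition density $p^*_{B_n}(t,y,z)$ according to whether $T^*_{B_{m+1}} \le t$, applying the strong Markov property of $X^*$ at $T^*_{B_{m+1}}$, and integrating in $t \in (0,\infty)$. Because $X^*$ has continuous sample paths, $X^*_{T^*_{B_{m+1}}} \in \partial B_{m+1}$ almost surely, so the expectation is bounded above by $\sup_{\partial B_{m+1}} g^*_{B_n}(\cdot, x_0)$. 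Taking the supremum over $y \in \partial B_m$ gives the one-step recursion
$$
\sup_{\partial B_m} g^*_{B_n}(\cdot, x_0) \;\le\; \sup_{\partial B_m} g^*_{B_{m+1}}(\cdot, x_0) + \sup_{\partial B_{m+1}} g^*_{B_n}(\cdot, x_0).
$$
Iterating this from $k = m$ up to $k = n-1$ and using the terminal condition $g^*_{B_n}|_{\partial B_n} \equiv 0$ yields
$$
\sup_{\partial B_m} g^*_{B_n}(\cdot, x_0) \;\le\; \sum_{k=m}^{n-1} \sup_{\partial B_k} g^*_{B_{k+1}}(\cdot, x_0).
$$

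For each $k \in \{m, \ldots, n-1\}$, I then apply Lemma \ref{lem7.2} with $A := B_k$ and $B := \overline B^{\,c}_{k+1}$, so that $\overline B^{\,c} = B_{k+1} \subset B_n \subset \Omega$, both sets have smooth boundaries and are well separated, and $x_0 \in A$ as the common center. This gives
$$
\sup_{\partial B_k} g^*_{B_{k+1}}(\cdot, x_0) \;\le\; c\,\frac{\mu(x_0)}{\ccap\bigl(B_k,\, \overline B^{\,c}_{k+1}\bigr)}
$$
with a constant $c$ independent of $k$ and $n$; summing over $k$ then delivers the stated bound.

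The main, essentially only, subtlety is to justify the Green-function decomposition rigorously in the non-reversible setting. This is a standard consequence of the strong Markov and Feller properties of $X^*$ (supplied by \cite[Theorem 1.13.1]{Pi95}), and no regularity issue arises near $x_0$ because both $y \in \partial B_m$ and $X^*_{T^*_{B_{m+1}}} \in \partial B_{m+1}$ lie at distance at least $K^m R > 0$ from $x_0$, where $g^*_{B_n}(\cdot, x_0)$ is continuous by Lemma \ref{Thm425}(1).
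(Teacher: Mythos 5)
Your proof is correct, but it reaches the telescoping through a genuinely different mechanism than the paper. The paper's proof fixes the point $y$ and telescopes over the \emph{domains} $B_m\subset B_{m+1}\subset\cdots\subset B_n$: it shows $g^*_{B_{k+1}}(y,x_0)-g^*_{B_k}(y,x_0)\le \sup_{B_{k+1}\setminus B_k}g^*_{B_{k+1}}(\cdot,x_0)$ by noting that $u=G^*_{B_{k+1}}f-G^*_{B_k}f$ is harmonic in $B_k$ (Lemma \ref{LG=GL}), invoking the maximum principle (Lemma \ref{prop4.2}), and then passing to the limit along approximations $f_{j,x_0}\rightharpoonup\delta_{x_0}$ with a Dini argument. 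You instead fix the domain $B_n$ and telescope over the \emph{spheres} $\partial B_m,\dots,\partial B_{n-1}$ via the first-exit decomposition $g^*_{B_n}(y,x_0)=g^*_{B_{k+1}}(y,x_0)+\E_y\big[g^*_{B_n}(X^*_{T^*_{B_{k+1}}},x_0)\big]$. Both reductions terminate at the same one-step quantities $\sup_{\partial B_k}g^*_{B_{k+1}}(\cdot,x_0)$, which are then bounded identically via Lemma \ref{lem7.2} applied to $A=B_k$, $B=\overline{B_{k+1}}^{\,c}$. Your route has the advantage that Green functions are only ever evaluated on spheres bounded away from the pole $x_0$, so the cancellation of singularities in the difference $g^*_{B_{k+1}}-g^*_{B_k}$ (the delicate point the paper's approximation handles) never arises. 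The cost is that your exit decomposition, stated for the kernel $g^*$ rather than for $G^*_Uf$ with smooth $f$, itself needs the same kind of delta-approximation together with the continuity of $g^*_{B_n}(\cdot,x_0)$ on $\partial B_{k+1}$ from Lemma \ref{Thm425}; you flag this as the one subtlety and sketch why it works, and with that step written out the argument is complete.
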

\begin{proof}
	For each $k\ge 0$, we will show that for any $y\in \Omega\backslash\{x_0\}$,
	\begin{align}\label{eq:gBcl}
	g_{B_{k+1}}^*(y, x_0)-	g_{B_{k}}^*(y, x_0)\le \sup_{B_{k+1}\backslash B_k}	g_{B_{k+1}}^*(\cdot,  x_0).
	\end{align}
	If \eqref{eq:gBcl} holds true, combining \eqref{(5.20)} and \eqref{(7.5)}, we have that
	$$
	g_{B_{k+1}}^*(y, x_0)-	g_{B_{k}}^*(y, x_0)\le \sup_{B_{k+1}\backslash B_k}	g_{B_{k+1}}^*(\cdot, x_0)=\sup_{\partial B_k} g_{B_{k+1}}^*(\cdot,  x_0)\le  c\mu(x_0) \left(\ccap(B_k, \overline B^c_{k+1}) \right)^{-1}$$
for some constant $c>0$.
Then by adding $k$ from $m+1$ to $n-1$,
we obtain that for any $y\in \Omega\backslash\{x_0\}$,
	\begin{align*}
	g_{B_{n}}^*(y, x_0)-g_{B_{m+1}}^*(y, x_0)
	\le  c\mu(x_0) \sum_{k=m+1}^{n-1}\left(\ccap(B_k, \overline B^c_{k+1}) \right)^{-1}.
	\end{align*}
Since \eqref{(7.5)} implies
$$\sup_{\partial B_{m}} g_{B_{m+1}}^*(\cdot, x_0)\le c\mu(x_0) \left(\ccap(B_m,  \overline B^c_{m+1}) \right)^{-1},$$
combining above two inequalities, we obtain our assertion.

Now we prove \eqref{eq:gBcl}. Since $g_{B_k}^*(y,x_0)=0$ for $y\in B_k^c$ and $g_{B_{k+1}}^*(y,x_0)=0$ for $y\in B_{k+1}^c$, it is sufficient to show that \eqref{eq:gBcl} holds for $y\in B_k\backslash\{x_0\}$. For any $f\in  \mathcal {B}^{\alpha}(B_k)\cap C^2(B_k)$ which is $f\geq0$ in $\Omega$ and $f=0$ in $B_k^c$, let $u:=G_{B_k+1}^* f-G_{B_k}^* f$.
Then $u$ is harmonic in $B_{k}$ by Lemma \ref{LG=GL}, so Lemma \ref{prop4.2} yields that  for any $B'\Subset B_k$,
\begin{align*}
	\sup_{B_k} u=\sup_{\partial B_k} u\leq \sup_{B_{k+1}\backslash B'} u \le \sup_{B_{k+1}\backslash B'} G_{B_{k+1}}^* f.
\end{align*}
Therefore for any $B'\Subset B_k$,
\begin{align}\label{eq:supG1}
\sup_{B_{k+1}}\left(G_{B_k+1}^* f-G_{B_k}^* f\right)	&=\Big(\sup_{B_k} u\Big)\vee \Big(\sup_{B_{k+1}\backslash B_k} u\Big)\\
&\le \Big(\sup_{B_{k+1}\backslash B'} G_{B_{k+1}}^* f\Big)\vee \Big(\sup_{B_{k+1}\backslash B_k}G_{B_{k+1}}^* f\Big)=\sup_{B_{k+1}\backslash B'} G_{B_{k+1}}^* f.\nn
\end{align}
	Consider a ball  $B'\Subset B_k$ containing $x_0$ and a positive sequence $\{\eps_{n}\}$ which is $\eps_{n}\rightarrow 0$ as $n\to \infty$. Choose functions $f_{n, x_0}\in \mathcal {B}^{\alpha}(B_k)\cap C^2(B_k)$ supported in $B(x_0, \eps_{n})\subset B_k$ and satisfying that $f_{n,x_0}\geq 0$, $\int_\Omega f_{n,x_0}dx=1$ and $f_{n, x_0}$ is weakly converge to the Dirac delta function $\delta_{x_0}$ in $C(\Omega)$ as $n\to \infty$.
	Applying $f_{n,x_0}$ to \eqref{eq:supG1}, and letting $B'\uparrow B_k$,  we have that for any $y\in B_k\backslash\{x_0\}$
\begin{align}\label{eq:supG3}
	G_{B_k+1}^* f_{n, x_0}(y)-G_{B_k}^* f_{n, x_0}(y)\le\sup_{B_{k+1}\backslash B_k} G_{B_{k+1}}^* f_{n, x_0}= \sup_{\partial B_k}G_{B_{k+1}}^*f_{n,x_0}.
	\end{align}
The above last equality comes from Lemma \ref{prop4.2} since $G_{B_{k+1}}^*f_{n,x_0}$ is harmonic in $B_{k+1}\backslash B_k$ and $G_{B_{k+1}}^*f_{n,x_0}=0$ on $\partial B_{k+1}$. On the other hand, since for any $\varphi\in C_0(\Omega)$
	$$
	\aligned
	\langle G_{B_k}^* f_{n,x_0},\varphi \rangle_{\mu}
	=\langle f_{n,x_0},G_{B_k} \varphi\rangle_{\mu}
	&=\int_{B(x_0, \eps_{n})}f_{n,x_0}(y)\int_{B_k} \varphi(z) g_{B_k}(y, z) dz \,\mu(y)dy\\
	&=\int_{B_k} \varphi(z) \int_{B(x_0, \eps_{n})} f_{n,x_0}(y)g_{B_k}^*(z, y)\mu(z) dydz\nn\\	
&\rightarrow\, \int_{B_k} \varphi(z)g_{B_k}^*(z, x_0)\mu(z) dz=\langle g_{B_k}^*(\cdot,x_0),\varphi \rangle_{\mu}
	\endaligned
	$$
	as $n\to \infty$,
	we have that
	\begin{align*}
	\lim_{n \rightarrow \infty}G_{B_k}^* f_{n,x_0}(y)=g_{B_k}^*(y,x_0) \quad\text{ for }y\in \Omega\backslash\{x_0\}.
	\end{align*}
Similarly, we have that $\lim_{n \rightarrow \infty}G_{B_{k+1}}^* f_{n,x_0}(y)=g_{B_{k+1}}^*(y,x_0)$ for $y\in \Omega\backslash\{x_0\}$. Thus letting $n$ goes to infinity in \eqref{eq:supG3}, we obtain
\begin{equation}\label{limsup}
g_{B_{k+1}}^*(y,x_0)-g_{B_k}^*(y,x_0)\le \lim_{n\rightarrow\infty}\sup_{\partial B_k}G_{B_{k+1}}^*f_{n,x_0},\quad \text{for }y\in B_k\backslash\{x_0\}.
\end{equation}
Since $G_{B_{k+1}}^*f_{n,x_0}(\cdot)$, $n\geq 0$, $g_{B_{k+1}}^*(\cdot,x_0)$ is continuous on $\partial B_k$, and $\partial B_k$ is compact,  Dini's theorem with \eqref{eq:adj(5.20)} implies that
$$
\lim_{n\rightarrow\infty}\sup_{\partial B_k}G_{B_{k+1}}^*f_{n,x_0}=\sup_{\partial B_k}\lim_{n\rightarrow\infty}G_{B_{k+1}}^*f_{n,x_0}=\sup_{\partial B_k}g_{B_{k+1}}^*(\cdot,x_0)=\sup_{B_{k+1}\backslash B_k}g_{B_{k+1}}^*(\cdot,x_0).
$$
Combining this with \eqref{limsup}, we complete the proof of \eqref{eq:gBcl}.
\end{proof}

{\bf proof of $\CC\Rightarrow$ $\CG$:}
Since $\CG$ and $\CGad$ are equivalent by \eqref{g_g*}, it is sufficient to show that $\CC\Rightarrow\CGad$.
Let $K\ge 1$ be the constant in $\CC$.
For any $x_0\in \Omega$ and $R>0$, consider a ball $B:=B(x_0, R)\subset \Omega$.
For any $y\in  K^{-1} B\backslash \{x_0\}$, let  $d_0:=|x_0- y|$ and  $B_0:=B(x_0, d_0)$. Then $0<d_0< K^{-1}R$  and $B_0\subset KB_0\subset B\subset \Omega$. Using \eqref{(7.5)} and $\CC$, since $\mu\in C^1(\Omega)$, we have that
\begin{align*}
\inf_{\partial  B_0}g_{KB_0}^*(\cdot, x_0)
\asymp\frac{\mu(x_0)}{\ccap(B_0, \overline {KB_0}^c)} \asymp\frac{\mu(x_0) d_0^2}{\mu(B_0)}\asymp d_0^{2-d}.
\end{align*}
Applying \eqref{eq:adj(5.20)} in Lemma \ref{lem5.3}, we have that for any $y\in K^{-1}B\backslash\{x_0\}$,
\begin{align*}
g_B^*(y, x_0)\ge \inf_{\overline {B_0}\backslash\{x_0\}} g_{KB_0}^*(\cdot, x_0) =\inf_{\partial  B_0}g_{KB_0}^*(\cdot, x_0)\ge c \, d_0^{2-d}\,
\end{align*}
for some $c>0$, which yields $\CGgead$.

To prove $\CGlead$, we first note that
\begin{align}\label{eq:CGm2}
g_B^*(y, x_0)&=g_B^*(y, x_0)\1_{K^{-1}B}(y)+g_B^*(y, x_0)\1_{B\backslash K^{-1}B}(y).
\end{align}
For $y\in B\backslash K^{-1}B$, by the similar way to the proof of $\CGgead$, \eqref{eq:adj(5.20)},  \eqref{(7.5)}, $\CC$ and the fact that $\mu\in C^1(\Omega)$ with $K^{-1}B\subset B\subset D$ yield that
\begin{align}
\sup_{B\backslash K^{-1}B}  g_{B}^*(\cdot, x_0)
&=\sup_{\partial  (K^{-1}B)}g_{B}^*(\cdot, x_0)\asymp \frac{\mu(x_0)}{\ccap(K^{-1}B,  \overline B^c)}\nn\\
&\asymp \frac{\mu(x_0)(K^{-1}R)^2}{\mu(K^{-1}B)}\asymp(K^{-1}R)^{2-d}= c R^{2-d}\le c d_0^{2-d}\label{eq:CG2}
\end{align}
for some $c>0$.
If $y\in K^{-1}B$, that is, $0<d_0< K^{-1}R$,
we choose an integer $n>1$ such that $K^{-n-1}R\le d_0<K^{-n}R$, and define $B_i:=B(x_0, K^{-i}R)$ so that $B_{i+1}\subset B_i$. Then $\CC$ with $K^{-1}B_i=B_{i+1}\subset B_i$ for $i=0, \ldots, n$ implies that
$$\ccap(B_{i+1},\overline B_i^c)^{-1}\nn\\
\le c\mu(x_0)\frac{(K^{-(i+1)}R)^2}{\mu(B_{i+1})}.
$$
Using this with  \eqref{eq:adj(5.20)} and Lemma \ref{Lem:ng}, we have that
\begin{align*}
\sup_{B\backslash B_{n+1}}g_B^*(\cdot, x_0)
&=\sup_{\partial B_{n+1}}g_B^*(\cdot, x_0)
\le c \mu(x_0)\sum_{i=0}^n \ccap(B_{i+1},\overline B_i^c)^{-1}\nn\\
&\le c\mu(x_0)\sum_{i=0}^n \frac{(K^{-(i+1)}R)^2}{\mu(B_{i+1})}.
\end{align*}
Since $\mu\in C^1(\Omega)$, $\mu(B_{i+1})\asymp \mu(x_0)(K^{-(i+1)}R)^d$ and therefore
\begin{align}\label{eq:CG3}
\sup_{B\backslash B_{n+1}}g_B^*(\cdot, x_0)&\le
 c(K^{-n}R)^{2-d}\sum_{i=0}^n K^{(n-i-1)(2-d)}\nn\\
&\le c d_0^{2-d}\sum_{i=0}^n K^{(n-i-1)(2-d)}\le c d_0^{2-d}
\end{align}
for some $c>0$.
Hence, we have $\CGlead$ by
\eqref{eq:CGm2}--\eqref{eq:CG3}.
\qed

\subsection{Implication $\CG\Rightarrow \CEad$}

In this subsection, we will prove $\CG\Rightarrow\CEad$ of our main Theorem \ref{main}.

\noindent
{\bf Proof of $\CG\Rightarrow\CEad$.}
It is enough to show that  $\CGad\Rightarrow\CEad$ by \eqref{g_g*}.  Fix a ball $B=B(x_0, R)$.
Since $B\subset B(y,2R)$ for $y\in B$,  \eqref{g_e} and $\CGlead$ imply that for any $x\in B$,
\begin{align}\label{eq:GE1}
\E_x[T^*_B]
=\int_B g_B^*(x, y)dy
&\leq \int_B g_{B(y,2R)}^*(x, y)dy\leq C_G^*\int_B |x-y|^{2-d}dy\nn\\
&\leq  c \int_0^{ 2R} s^{2-d} s^{d-1} ds= c R^2
\end{align}
for some $c>0$ and this give the proof of $\CElead$.
To prove the lower bounds of $\E_x[T^*_B]$, that is $\CEgead$,
choose $\delta:=K^{-1}\in (0, 1)$ where $K>1$ is the constant in $\CGad$.
For fixed point $y\in \delta_1 B$, let $B':=B(y,(1-\delta)R)$ then $B'\subset B$. Using the similar argument to the upper bound estimates, \eqref{g_e} and $\CGgead$  imply that for any $x\in \delta B$,
\begin{align}\label{eq:GE2}
\E_x[T_B^*]
=\int_B g_B^*(x,y)dy
&\geq
\int_{B} g_{B'}^*(x,y)dy\geq (C_G^*)^{-1} \int_{B}|x-y|^{2-d}dy\nn\\
&\geq c \int_0^{ 2\delta_1R}s^{2-d}s^{d-1}dy=c R^2
\end{align}
for some $c>0$.
Therefore we have $\CEad$  by \eqref{eq:GE1} and \eqref{eq:GE2}.
\qed

\subsection{Implication $\CEad\Rightarrow \CC$}
To prove $\CEad\Rightarrow\CC$ in Theorem \ref{main}, we introduce an auxiliary lemma between the capacity and the mean exit time firstly.

\begin{lem}\label{lem8.3-8.4}
	Consider sets $A, B$ satisfying the $\ass{S}$. Then we have that
	$$	
	\frac{\mu(A)\big(\inf_{x\in\partial A}\mathbb{E}_x[T^*_{\bar B^c}]\big)^2}{\mu(\bar B^c)^2
		\sup_{x\in \bar{B}^c}\mathbb{E}_x[T^*_{\bar B^c}]}\leq \ccap (A,B)^{-1}\leq\frac{\sup_{x\in \bar B^c}\mathbb{E}_x[T^*_{\bar B^c}]}{\mu(A)}.
	$$
\end{lem}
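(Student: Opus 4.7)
Throughout, set $D := \bar B^c$, let $h := h_{A,B}$ be the equilibrium potential solving \eqref{pe}, let $\nu := \nu_{A,B}$ be the equilibrium measure on $\partial A$, and abbreviate
$$
m := \inf_{y \in \partial A} \E_y[T^*_D], \qquad M := \sup_{y \in D} \E_y[T^*_D].
$$
The plan hinges on two ingredients from Section \ref{Green_cap}. First, the equilibrium identities $\ccap(A,B) = \nu(\partial A)$ and $h(x) = \int_{\partial A} g_D(x,y)\mu(y)^{-1}\,\nu(dy)$ from Lemma \ref{lem6.5}. Second, the pointwise duality $\mu(x)\,g_D^*(x,y) = \mu(y)\,g_D(y,x)$, obtained by integrating \eqref{dual:k} in $t$. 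Combining these yields the key identity
$$
\mu(y)\,\E_y[T^*_D] \;=\; \int_D \mu(y)\,g_D^*(y,x)\,dx \;=\; \int_D \mu(x)\,g_D(x,y)\,dx, \qquad y \in D, \qquad (\star)
$$
which will be the workhorse for both sides.

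For the upper bound on $\ccap(A,B)^{-1}$, I would exploit that $h \equiv 1$ on $A \subset D$ and $h \geq 0$ on $D$, so $\mu(A) \leq \int_D h\,d\mu$. Substituting the representation of $h$, applying Fubini, and invoking $(\star)$ in the inner integral gives
$$
\mu(A) \;\leq\; \int_{\partial A} \frac{1}{\mu(y)}\left(\int_D \mu(x)\,g_D(x,y)\,dx\right)\nu(dy) \;=\; \int_{\partial A} \E_y[T^*_D]\,\nu(dy) \;\leq\; M\,\ccap(A,B),
$$
which is the right-hand inequality of the lemma.

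For the lower bound on $\ccap(A,B)^{-1}$, I would use $(\star)$ in the opposite direction: for $y \in \partial A$, $\mu(y)\,m \leq \int_D \mu(x)\,g_D(x,y)\,dx$. Integrating against $\nu(dy)/\mu(y)$ and swapping order (again via the representation of $h$) produces $m\,\ccap(A,B) \leq \int_D h\,d\mu$. Cauchy-Schwarz then gives $(\int_D h\,d\mu)^2 \leq \mu(D)\int_D h^2\,d\mu$. To control $\int_D h^2\,d\mu$, substitute the representation of $h$ into one factor, swap order, and use the duality identity to rewrite the inner integral $\int_D h(x)\mu(x)g_D(x,y)\,dx$ as $\mu(y)\,G_D^* h(y)$; since $0 \leq h \leq 1$ on $D$, $G_D^* h \leq G_D^* \mathbf{1} = \E_\cdot[T^*_D] \leq M$ on $D$, giving $\int_D h^2\,d\mu \leq M\,\ccap(A,B)$. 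Combining these, $m^2\,\ccap(A,B)^2 \leq \mu(D)\,M\,\ccap(A,B)$, i.e., $\ccap(A,B) \leq \mu(D)M/m^2$, and since $A \subset D$ implies $\mu(A)\leq\mu(D)$,
$$
\ccap(A,B)^{-1} \;\geq\; \frac{m^2}{\mu(D)\,M} \;\geq\; \frac{\mu(A)\,m^2}{\mu(D)^2\,M}.
$$

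The only technical delicacy is justifying the repeated interchange of the boundary integral against $\nu = -\a\nabla h \cdot \mathbf{n}_A\,\mu\,d\sigma$ with the volume integral of $g_D(\cdot,y)\,\mu$, which is singular at $y$; this is handled by the $C^{2,\alpha}$-regularity of $\partial A$ (from $\ass{S}$), the joint continuity of $g_D$ off the diagonal (Lemma \ref{Thm425}), the integrability $\int_D g_D(\cdot,y)\,dx = \E_\cdot[T_D] < \infty$, and $\sigma(\partial A) < \infty$. Beyond this bookkeeping I do not anticipate a substantive obstacle.
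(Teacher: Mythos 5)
Your proposal is correct, and its skeleton is the paper's own: both arguments rest on Lemma \ref{lem6.5} (the identities $\ccap(A,B)=\nu(\partial A)$ and the representation \eqref{h-nu} of $h$), use Fubini together with the duality \eqref{dual:k} and \eqref{g_e} to rewrite $\int_{\bar B^c}h\,d\mu$ as $\int_{\partial A}\E_y[T^*_{\bar B^c}]\,\nu(dy)$, and sandwich that quantity between $\ccap(A,B)\inf_{\partial A}\E_\cdot[T^*_{\bar B^c}]$ and $\ccap(A,B)\sup\E_\cdot[T^*_{\bar B^c}]$; your upper bound on $\ccap(A,B)^{-1}$ is exactly the paper's \eqref{cap:upb}. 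The one genuine difference is the finish of the lower bound. The paper applies Cauchy--Schwarz and then discards $\int_{\bar B^c}h^2\,d\mu$ via $\int h^2\,d\mu\le 1$, asserting along the way that $\bigl(\int h\,d\mu\bigr)^2\le \mu(\bar B^c)^2\int h^2\,d\mu$ --- but Cauchy--Schwarz only yields a single power of $\mu(\bar B^c)$, and since $\mu(\bar B^c)\le 1$ the paper's chain, read literally, only delivers the weaker denominator $\mu(\bar B^c)$. You instead bound $\int_{\bar B^c}h^2\,d\mu\le \ccap(A,B)\sup\E_\cdot[T^*_{\bar B^c}]$ by feeding the representation of $h$ back into one factor and using $G^*_{\bar B^c}h\le G^*_{\bar B^c}\1$, arriving at $\ccap(A,B)\le \mu(\bar B^c)M/m^2$, and then pass to the stated constant via $\mu(A)\le\mu(\bar B^c)$. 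This arrangement actually proves the inequality with $\mu(\bar B^c)^2$ exactly as stated (indeed a slightly stronger one), so your variant is not just admissible but repairs a small slip in the published chain. The Fubini interchanges you flag at the end are justified the same way the paper implicitly justifies them, so there is no gap.
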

\begin{proof}
For any sets $A, B$  satisfying the \ass{S}, let $h(x):=h_{A,B}(x)=\P_x(\tau_A<\tau_{B})$ be the solution of \eqref{pe}.
By Lemma \ref{lem6.5} we recall that
$$\ccap(A, B)=\nu(\partial A)\q\text{ and } \q
	h(x)=\int_{\partial A}\frac{g_{\bar B^c}(x,y)}{\mu(y)}\nu(dy)\ \q\text{ for } x\in \bar B^c\backslash\partial A.
	$$
From Fubini's theorem with \eqref{g_e}, \eqref{dual:k} and \eqref{h-nu0}, we have that
	\begin{align}\label{hnuup}
	\int_{\bar B^c} h(x)\mu(dx)&=\int_{\partial A}\int_{\bar B^c} g_{\bar B^c}(x,y)\frac{\mu(x)}{\mu(y)}dx\nu(dy)\nn\\
	&=\int_{\partial A}\int_{\bar B^c} g_{\bar B^c}^*(y,x)dx\,\nu(dy)=\int_{\partial A}\mathbb{E}_y[T^*_{\bar B^c}]\,\nu(dy)\nn\\
	&\leq \nu(\partial A)\sup_{x\in\partial A}\mathbb{E}_x[T^*_{\bar B^c}]\leq \ccap(A,B)\sup_{x\in \bar B^c}\mathbb{E}_x[T^*_{\bar B^c}].
	\end{align}
Since $h(x)=1$ in $A$,  we have that $\mu(A)\le \int_{\bar B^c} h(x)\mu(dx)$ and therefore
\begin{equation}\label{cap:upb}
\frac{1}{\ccap(A,B)}\leq \frac{\sup_{x\in \bar B^c}\mathbb{E}_x[T^*_{\bar B^c}]}{\int_{\bar B^c} h(x)\mu(dx)}\leq \frac{\sup_{x\in \bar B^c}\mathbb{E}_x[T^*_{\bar B^c}]}{\mu(A)}.
\end{equation}
Similar way to obtaining \eqref{hnuup}, we have the lower bound as follows:
$$\int_{\bar B^c} h(x)\mu(dx)= \int_{\partial A}\mathbb{E}_y[T^*_{\bar B^c}]\nu(dy)
\geq \nu(\partial A)\inf_{x\in\partial A}\mathbb{E}_x[T^*_{\bar B^c}]=\ccap(A,B)\inf_{x\in\partial A}\mathbb{E}_x[T^*_{\bar B^c}].
$$
Combining the above two inequalities with the Cauchy-Schwarz inequality, we have that
$$
\aligned
\frac{\ccap(A, B)}{\int_{\bar{B}^c}h^2(x)\mu(dx)}&\leq\frac{\mu(\bar B^c)^2\ccap(A,B)}{\big(\int_{\bar B^c} h(x)\mu(dx\big)^2}\\
	&\leq \frac{\mu(\bar B^c)^2}{\ccap(A,B)\big(\inf_{x\in\partial A}\mathbb{E}_x[T^*_{\bar B^c}]\big)^2}\leq\frac{\mu(\bar B^c)^2\sup_{x\in \bar B^c}\mathbb{E}_x[T^*_{\bar B^c}]}{\mu(A)\big(\inf_{x\in\partial A}\mathbb{E}_x[T^*_{\bar B^c}]\big)^2}.
	\endaligned
	$$
	Since $\int_{\bar{B}^c}h^2(x)\mu(dx)\le 1$, we have that
	\begin{equation}\label{cap:lowb}
	\frac{1}{\ccap(A,B)}
	\geq \frac{\mu(A)\big(\inf_{x\in\partial A}\mathbb{E}_x[T^*_{\bar B^c}]\big)^2}{\mu(\bar B^c)^2\sup_{x\in \bar B^c}\mathbb{E}_x[T^*_{\bar B^c}]}.
	\end{equation}
	Therefore, \eqref{cap:upb}--\eqref{cap:lowb} give the desired result.
\end{proof}

{\bf Proof of ${\CEad\Rightarrow \CC}$.}
Let $B\subset\Omega$ be a ball with radius $R>0$. For any $\delta\in(0, 1)$, Lemma \ref{lem8.3-8.4} and $\CElead$ imply
\begin{align}\label{eq:EC1}
\ccap(\delta B,\overline B^c)^{-1}\leq \frac{\sup_{x\in B}\mathbb{E}_x[T^*_{B}]}{\mu(\delta B)}\leq c\frac{R^2}{\mu(\delta B)}
\end{align}
for some constant $c>0$.
On the other hand, to prove $\CCge$, consider the  constant $\delta$ in $\CEgead$.  For any $0<\delta<\delta_1$,
since $x\mapsto\mathbb{E}_x [T^*_B]$ is continuous in $B$,
$\CEgead$ implies that
$$
\inf_{x\in\partial (\delta B)}\mathbb{E}_x[T^*_B]\geq\inf_{x\in \delta B}\mathbb{E}_x[T^*_B]\geq c^{-1}R^2.
$$
Combining the above inequality with Lemma \ref{lem8.3-8.4} and $\CElead$, we conclude that
\begin{align}\label{eq:EC2}
\ccap(\delta B,\overline B^c)^{-1}&\geq \frac{\mu(\delta B)\big(\inf_{x\in\partial (\delta B)}\mathbb{E}_x[T^*_B]\big)^2}{\mu(B)^2\sup_{x\in B}\mathbb{E}_x[T^*_B]}\geq  \frac{\mu(\delta B)[c^{-1}R^2]^2}{\mu(B)^2 cR^2}\geq c\frac{R^2}{\mu(\delta B)}
\end{align}
for some $c>0$. The last inequality holds since $\mu\in C^1(\Omega)$.
Therefore, we obtain $\CCge$ by \eqref{eq:EC1} and $\CCle$ by \eqref{eq:EC2} with $K=\delta^{-1}$.
\qed

{\bf Acknowledgement} Lu-Jing Huang acknowledges support from NSFC No. 11771047, 11901096 and Probability and Statistics: Theory and Application (IRTL1704).

\bibliographystyle{plain}
\bibliography{CE}

\vskip 0.3truein

{\bf Lu-Jing Huang}
\vskip -.1truein
College of Mathematics and Informatics, Fujian Normal University, Fuzhou, 350007, P.R. China
\vskip -.1truein
E-mail: \texttt{huanglj@fjnu.edu.cn}

\medskip

{\bf Kyung-Youn Kim*}
\vskip -.1truein
Institute of Mathematics, Academia Sinica, Taipei, 11529,
Taiwan
\vskip -.1truein
E-mail: \texttt{kykim@gate.sinica.edu.tw}
\end{document}